\newcommand{\arxiv}[2][]{\ifthenelse{\equal{#1}{}}
{\href{http://arxiv.org/abs/#2}{\tt arXiv:#2}}
{\href{http://arxiv.org/abs/math/#2}{\tt arXiv:math.#1/#2}}}
\theoremstyle{plain}
\newtheorem{theorem}{Theorem}[section]
\newtheorem{lemma}[theorem]{Lemma}
\newtheorem{corollary}[theorem]{Corollary}
\newtheorem{proposition}[theorem]{Proposition}
\theoremstyle{definition}
\newtheorem{example}[theorem]{Example}
\newtheoremstyle{remark}
{}{}{}{}{\itshape}{}{ }{\thmname{#1}\thmnumber{ \itshape #2.}}
\theoremstyle{remark}
\newtheorem{remark}[theorem]{Remark}
\newtheoremstyle{concise}
{}{}{}{}{\bfseries}{}{ }{\thmnumber{#2.}\thmnote{ #3.}}
\theoremstyle{concise}
\def\N{\mathbb{N}} 
\def\R{\mathbb{R}} 
\def\Z{\mathbb{Z}}
\def\x{\times}
\def\but{\setminus} 
\def\emb{\hookrightarrow} 
\def\i{\subset}
\def\C{\mathcal{C}}
\def\phi{\varphi}
\def\xr#1{\xrightarrow{#1}} 
 \renewcommand{\:}{\colon}
\DeclareMathOperator{\cyl}{cyl}
\DeclareMathOperator*{\colim}{colim} 
\DeclareMathOperator*{\derlim}{lim^1}
\DeclareMathOperator{\Int}{Int} 
 \DeclareMathOperator{\Cl}{Cl} 
\DeclareMathOperator{\Fr}{Fr} \DeclareMathOperator{\Hom}{Hom} 
\DeclareMathOperator{\supp}{supp} \DeclareMathOperator{\Ext}{Ext}
\def\tph#1{\raise2.5pt\hbox{\the\textfont1\char"7F}\!\!#1}
\def\tpm#1{\raise0pt\hbox{\the\textfont1\char"7F}\!#1}
\def\tpl#1{\lower1.5pt\hbox{\the\textfont1\char"7F}\!#1}
\begin{document}

\title[Algebraic topology of Polish spaces. II: Axiomatic homology]
{Algebraic topology of Polish spaces. \\ II: Axiomatic homology}
\author{Sergey A. Melikhov}
\date{\today}
\address{Steklov Mathematical Institute of Russian Academy of Sciences,
ul.\ Gubkina 8, Moscow, 119991 Russia}
\email{melikhov@mi.ras.ru}

\begin{abstract}
Milnor proved two uniqueness theorems for axiomatic (co)homology: one for pairs of compacta (1960) 
and another, in particular, for pairs of countable simplicial complexes (1961). 
We obtain their common generalization: the Eilenberg--Steenrod axioms along with Milnor's map excision axiom and 
a (non-obvious) common generalization of Milnor's two additivity axioms suffice to uniquely characterize 
(co)homology of closed pairs of Polish spaces (=separable complete metrizable spaces).
The proof provides a combinatorial description of the (co)homology of a Polish space in terms of a cellular
(co)chain complex satisfying a symmetry of the form $\lim\,\colim = \colim\,\lim$. 
\end{abstract}

\maketitle
\section{Introduction}

In this paper we obtain an axiomatic characterization of Steenrod--Sitnikov homology and \v Cech cohomology
on closed pairs of Polish spaces.
The only new axiom that we need is the following 

\theoremstyle{plain}
\newtheorem*{CAA}{Controlled Additivity Axiom}
\newtheorem*{UT}{Uniqueness Theorem}

\begin{CAA} If $X$ is a closed subset of a Polish space $M$ such that $M\but X$ is represented as 
a disjoint union $\bigsqcup_{i\in\N}K_i$, where the diameters of the $K_i$ tend to zero whenever they approach $X$, 
then $H_n(M,X)$ is isomorphic in a natural way to a certain explicit subgroup of $\prod_{i\in\N} H_n(K_i)$, 
which contains $\bigoplus_{i\in\N} H_n(K_i)$; and similarly for $H^n(M,X)$.
\end{CAA}

The subgroup in question is the so-called $\bold K$-direct sum, where $\bold K$ is a certain explicit ideal in 
the boolean algebra of all subsets of the indexing set $\N$.
This ideal is nothing but an obvious incarnation of the dual pair of filtrations introduced in Part I of
the present series \cite{M-I}*{\S\ref{fish:filtrations}}.
The full details of the statement of the axiom are in \S\ref{controlled additivity}.

The Controlled Additivity Axiom is a common generalization of Milnor's two additivity axioms.
The case $X=\emptyset$ is nothing but additivity with respect to countable disjoint union.
The case where $X$ is a single point and $M$ is compact is precisely Milnor's Cluster Axiom, that is,
additivity with respect to countable metric wedge.

\begin{UT} 
If $H$ and $H'$ are ordinary (co)homology theories (i.e., functors satisfying the Eilenberg--Steenrod axioms) 
on the category of closed pairs of Polish spaces, which additionally satisfy the Wallace--Milnor Map Excision 
Axiom and the Controlled Additivity Axiom, then any graded isomorphism $H(pt)\to H'(pt)$ extends to 
a natural equivalence from $H$ to $H'$.
\end{UT}

The proof is in \S\ref{uniqueness-chapter}.
The necessary background, along with a brief overview of previously known axiomatic characterizations of 
homology and cohomology, is provided in \S\ref{background}.

The main point of the Uniqueness Theorem seems to be not philosophical but computational.
That Steenrod--Sitnikov homology and \v Cech cohomology are the ``right'' theories (at least for the purposes
of geometric topology, and at least for metrizable spaces) was more or less clear anyway, from previously known 
axiomatic characterizations (see \S\ref{general-uniqueness}) and from other results.
On the other hand, an axiomatic characterization can also be understood, at least in theory, as a computational 
tool, enabling one to compute homology and cohomology of an arbitrary space or pair (in a certain generality) 
directly from the stated axioms. 
In this respect, usefulness of previously known axiomatic characterizations on categories including Polish
spaces is somewhat questionable (see \S\ref{general-uniqueness}), and our Uniqueness Theorem does seem to be 
a major advance.

Namely, in the course of proving the Uniqueness theorem we express the Steenrod--Sitnikov homology and 
the \v Cech cohomology of a Polish space as (co)homology of a certain cellular (co)chain complex, based
on (co)chains ``that are finite in one direction but possibly infinite in the other direction''
(\S\ref{chain complexes}).
This (co)chain complex admits two different descriptions: as a direct limit of inverse limits and
as an inverse limit of direct limits.
The fact that they lead to the same (co)chain complex suggests that it may be a noteworthy object in 
its own right, and we devote the final section (\S\ref{chain complexes2}) to a clarification of its 
geometric structure.
In fact, this chain complex has already been used to establish one result in a subsequent paper 
\cite{M-III}*{Theorem \ref{lim:lim2theorem}}.

This paper can be read independently of Part I \cite{M-I}; the only result of Part I that will be essentially
used here is the duality lemma \cite{M-I}*{Lemma \ref{fish:duality}}.
This lemma does, however, provide a strong conceptual connection between the central ideas of Part I and
of the present Part II: it is the ultimate reason why ``inverse and direct limits commute'' in both papers
(in contrast to the subsequent paper \cite{M-III}, in which they ``do not commute'').

\section{Background} \label{background}

\subsection{Eilenberg--Steenrod axioms}
Let us recall the Eilenberg--Steenrod definition of homology and cohomology theories \cite{ES}.
Most algebraic topology textbooks give it only for one specific category (the category of pairs of 
topological spaces).
Eilenberg and Steenrod allow any ``admissible'' category, whose definition is a bit long.
But we will need only a smaller class of categories, which is easy to describe.

Let $\C$ be a full subcategory of the category of pairs of topological spaces which contains $\emptyset$, all
singleton spaces and $I:=[0,1]$ (we identify every space $X$ with the pair $(X,\emptyset)$) and is closed 
under finite products and under the functors $(X,A)\mapsto X$, $(X,A)\mapsto A$ and $X\mapsto (X,X)$.

A {\it (co)homology theory} on $\C$ consists of
\begin{itemize}
\item a covariant (resp.\ contravariant) functor $H=(H_n)$ (resp.\ $H=(H^n)$) from $\C$ to the category of 
$\Z$-graded abelian groups (where $H_n(f)$, resp.\ $H^n(f)$ is usually abbreviated as $f_*$, resp.\ $f^*$);
\item a natural transformation $\partial=(\partial_n)$ of degree $-1$ from $H$ to the composition of 
$(X,A)\mapsto A$ with $H$ (resp.\ a natural transformation $d=(d_n)$ of degree $1$ from the composition 
of $(X,A)\mapsto A$ with $H$ to $H$)
\end{itemize}
provided that the following axioms are satisfied:
\begin{itemize}
\item (exactness) if $i\:A\emb X$ and $j\:X\emb (X,A)$ are in $\C$, then
\[\dots\to H_n(A)\xr{i_*} H_n(X)\xr{j_*} H_n(X,A)\xr{\partial_n} H_{n-1}(A)\to\dots,\]
respectively, \[\dots\to H^n(X,A)\xr{j^*} H^n(X)\xr{i^*} H^n(A)\xr{d_n} H^{n+1}(X,A)\to\dots\] 
is exact;
\item (homotopy) if $f,g\:(X,A)\to (Y,B)$ in $\C$ are homotopic, then $H(f)=H(g)$;
\item (excision) if $u\:(X\but U,\,A\but U)\emb(X,A)$ is in $\C$, where $U$ is open in $X$ and $\Cl U\subset\Int A$,
then $H(u)$ is an isomorphism.
\end{itemize}
The (co)homology theory is called {\it ordinary} if it additionally satisfies 
\begin{itemize}
\item (dimension) if $X$ is a singleton, then $H_i(X)=0$ (resp.\ $H^i(X)=0$) for all $i\ne 0$.
\end{itemize}
In this case $H_i(pt)$ (resp.\ $H^i(pt)$), where $pt=\{\emptyset\}$, is called the {\it coefficient} group
of the ordinary theory.

Eilenberg and Steenrod proved that an ordinary (co)homology theory on the category $\C_0$ of pairs of 
finite simplicial complexes and their subcomplexes is {\it unique} (with given coefficients), in the sense 
that for any two ordinary (co)homology theories $H$, $H'$ on $\C_0$, any graded isomorphism $H(pt)\to H'(pt)$ 
extends to a natural equivalence from $H$ to $H'$ \cite{ES}*{Theorem III.10.1}.

(Co)homology theories on the category $\C_\infty$ of pairs of metrizable spaces are not unique.
In particular, they include both Steenrod--Sitnikov homology (see \cite{Mas}, \cite{Sk5}) and singular homology;
and both \v Cech cohomology (see \cite{Sp}, \cite{Sk5}) and singular cohomology.

\subsection{Milnor's axioms}\label{Milnor axioms}

The {\it cluster} $\bigvee_{i\in\N}(X_i,x_i)$ of pointed metrizable spaces $(X_i,x_i)$ (not to be confused 
with their non-metrizable infinite wedge nor with the metric wedge of \cite{M-III}*{\S\ref{lim:metric quotient}}) 
is the limit of the inverse sequence of finite wedges, $\dots\xr{p_2} (X_1,x_1)\vee (X_2,x_2)\xr{p_1} (X_1,x_1)$, 
where each $p_n$ shrinks the $(n+1)$st factor of the wedge $\bigvee_{i\in\N}(X_i,x_i)$ onto the wedge point.
Here the finite wedges are regarded as pointed spaces, and so is their inverse limit.
We sometimes abbreviate $\bigvee_i(X_i,x_i)$ by $\Big(\bigvee_i X_i,\,\infty\Big)$.

Alternatively, $\bigvee_i X_i$ may be defined as the subset of $\prod_i X_i$ 
consisting of all points with all coordinates except possibly one equal to the basepoint, with basepoint 
$\infty:=(x_1,x_2,\dots)$.

The following axioms for a (co)homology theory on $\C$ are due to Milnor \cite{Mi1}, \cite{Mi2}:
\begin{itemize}
\item ($\bigsqcup$-additivity) if $\bigsqcup_{i\in\N} X_i$ and each $X_i$ are in $\C$, then
for each $n$ the inclusions $X_k\emb\bigsqcup_i X_i$ induce an isomorphism 
\[H_n\Big(\bigsqcup_{i\in\N} X_i\Big)\simeq\bigoplus_{i\in\N} H_n(X_i),\]
respectively, 
\[H^n\Big(\bigsqcup_{i\in\N} X_i\Big)\simeq\prod_{i\in\N} H^n(X_i);\] 
\item ($\bigvee$-additivity) if $\Big(\bigvee_{i\in\N} X_i,\,\infty\Big)$ and each $(X_i,x_i)$ are 
in $\C$, then for each $n$ the retraction maps $\Big(\bigvee_i X_i,\,\infty\Big)\to (X_k,x_k)$ induce 
an isomorphism 
\[H_n\Big(\bigvee_{i\in\N} X_i,\,\infty\Big)\simeq\prod_{i\in\N} H_n(X_i,x_i),\]
respectively, 
\[H^n\Big(\bigvee_{i\in\N} X_i,\,\infty\Big)\simeq\bigoplus_{i\in\N} H^n(X_i,x_i);\]
\item (map excision) if $f\:(X,A)\to(Y,B)$ is in $\C$, where $A$, $B$ are closed in $X$, $Y$ and $f$ is 
a closed map that restricts to a homeomorphism between $X\but A$ and $Y\but B$, 
then for each $n$, $f$ induces an isomorphism
\[H_n(X,A)\simeq H_n(Y,B),\]
respectively,
\[H^n(X,A)\simeq H^n(Y,B).\]
\end{itemize}

Milnor proved the uniqueness of $\bigvee$-additive (co)homology satisfying map excision on the category 
$C_\kappa$ of pairs of compact metrizable spaces \cite{Mi1}.
He also proved the uniqueness of $\bigsqcup$-additive (co)homology on the category of pairs of countable 
CW-complexes and their subcomplexes \cite{Mi2}. 
Similar arguments establish the uniqueness of $\bigsqcup$-additive (co)homology on the category $\C_\nu$ 
of pairs of countable simplicial complexes and their subcomplexes with metric topology; for our purposes
it is convenient to use the metric defined in \cite{M3}.

Petkova (refining previous work of Sklyarenko \cite{Sk71}, who used a different set of axioms) proved that 
Steenrod--Sitnikov homology and \v Cech cohomology are unique ordinary homology theories on the category 
of closed pairs of {\it locally compact} separable metrizable spaces that satisfy $\bigvee$-additivity 
and map excision on pairs of {\it compacta}, and $\bigsqcup$-additivity for disjoint unions of 
compacta \cite{Pe2} (homology), \cite[Theorem 9]{Pe1} (cohomology).

\subsection{Remarks on Milnor's axioms}

To be precise, Milnor formulated the $\bigvee$-additivity and the map excision axioms only for one specific 
category (namely, $\C_\kappa$).
However, the validity of the map excision axiom for the Alexander--Spanier cohomology (for metrizable spaces),
had been previously proved by Wallace \cite{Wal} (see also \cite{Sp}*{Theorem 6.6.5}).
The map excision axiom is also valid for Steenrod--Sitnikov homology (for metrizable spaces) 
\cite{Mas}*{Theorem 9.7}.
The restriction to closed maps is essential in the map excision axiom, by considering the inclusion 
$([0,1),\{0\})\emb ([0,1],\{0,1\})$ (see details in \cite{Mas}*{remark after Theorem 8.7}; see also
Example \ref{non-closed} below.

It was shown in Part I of the present series \cite{M-I} that if a (co)homology theory on closed pairs 
of metrizable spaces is fine shape invariant, then it satisfies map excision.
Since \v Cech cohomology and Steenrod--Sitnikov homology are fine shape invariant \cite{M-I}, this yields 
an alternative, short proof that they satisfy map excision.   
By a result of Mrozik, for a (co)homology theory on pairs of compacta, map excision is
equivalent to invariance under fine shape (see \cite{M-I}).

The $\bigoplus$-additivity of Steenrod--Sitnikov homology and \v Cech cohomology 
(i.e.\ the $\bigsqcup$-additivity for homology and the $\bigvee$-additivity for cohomology,
for metrizable spaces) are obvious from the definitions.

\begin{theorem}\label{10.0} Steenrod--Sitnikov homology and \v Cech cohomology are 
$\prod$-additive (on metrizable spaces).
\end{theorem}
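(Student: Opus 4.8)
The plan is to handle the two assertions separately --- that \v Cech cohomology carries a countable disjoint union to a product and that Steenrod--Sitnikov homology carries a cluster to a product --- and to reduce both to one elementary interchange principle, which I record first. If $(D_i)_{i\in\N}$ is a sequence of directed posets, $F_i$ are functors from $D_i$ to abelian groups, and $D=\prod_i D_i$ carries the componentwise order, then the canonical map
\[
\colim_{D}\ \prod_i F_i\bigl(\pi_i(-)\bigr)\ \longrightarrow\ \prod_i\ \colim_{D_i}F_i
\]
is an isomorphism: for surjectivity, an element of the right side is represented in coordinate $i$ over some $d_i\in D_i$, and $(d_i)_i$ represents a preimage; for injectivity, if an element represented over a tuple $(d_i)_i$ dies in every coordinate, then in coordinate $i$ it already dies over some $d_i'\ge d_i$, hence over $(d_i')_i$. (This fails for a general product of colimits; it works here because witnesses are produced one coordinate at a time and then reassembled.)

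For \v Cech cohomology, recall $\check H^n(X)=\colim_{\mathcal U}H^n(N\mathcal U)$ over the directed set of open covers of $X$ (or any cofinal subfamily, e.g.\ the locally finite ones). If $X=\bigsqcup_{i\in\N}X_i$, every open cover $\mathcal W$ refines to the cover $\bigcup_i\{\,W\cap X_i: W\in\mathcal W\,\}$, a disjoint union of open covers of the individual $X_i$; so covers of this ``split'' form are cofinal, and they are indexed by $\prod_i\mathrm{Cov}(X_i)$. For such a cover $N\bigl(\bigsqcup_i\mathcal U_i\bigr)=\bigsqcup_i N\mathcal U_i$, and since a simplicial $n$-cochain on a countable disjoint union of complexes is an arbitrary family of $n$-cochains on the summands, $H^n\bigl(N(\bigsqcup_i\mathcal U_i)\bigr)=\prod_i H^n(N\mathcal U_i)$. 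The interchange principle now yields $\check H^n(\bigsqcup_i X_i)\cong\prod_i\check H^n(X_i)$, and a routine check shows this is exactly the map induced by the inclusions $X_i\emb\bigsqcup_j X_j$, because each such inclusion pulls a split cover back to its $i$-th part and therefore induces the $i$-th projection on nerve cohomology.

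For Steenrod--Sitnikov homology, I use that it has compact supports, $H_n(Y,B)\cong\colim H_n(K,L)$ over compact pairs $(K,L)\i(Y,B)$ (see \cite{Mas}). Let $Y=\bigvee_{i\in\N}X_i$ with basepoint $\infty$ and let $r_i\colon Y\to X_i$ be the $i$-th retraction. Any compact $C\i Y$ containing $\infty$ lies in $\bigvee_i K_i$, where $K_i:=r_i(C)$ is a compact subset of $X_i$ containing $x_i$; and $\bigvee_i K_i$, with the subspace topology (which coincides with its intrinsic cluster topology), is a cluster of compacta, hence itself a compact subset of $Y$ containing $\infty$. Thus the sub-clusters $\bigvee_i K_i$ are cofinal among compact subsets of $Y$ containing $\infty$, so
\[
H_n(Y,\infty)\ \cong\ \colim\ H_n\Bigl(\bigvee_i K_i,\ \infty\Bigr)\ \cong\ \colim\ \prod_i H_n(K_i,x_i),
\]
the colimit running over $(K_i)\in\prod_i\{\,\text{compact subsets of }X_i\text{ containing }x_i\,\}$ and the last isomorphism being Milnor's cluster additivity for compacta \cite{Mi1} (internally, Milnor's proof uses a $\derlim$ exact sequence together with finite-wedge additivity, the latter supplied by the Wallace--Milnor Map Excision Axiom via the collapses $\bigl(K_1\vee\dots\vee K_k,\,K_1\vee\dots\vee K_{k-1}\bigr)\to(K_k,x_k)$). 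Feeding this into the interchange principle with $F_i(K)=H_n(K,x_i)$, and using compact supports of $H_n(X_i,x_i)$, we get $H_n(Y,\infty)\cong\prod_i H_n(X_i,x_i)$, compatibly with the retractions.

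The main obstacle is the homology half: the cluster is out of reach of the Eilenberg--Steenrod axioms by themselves, so one must bring in the compact-supports property of Steenrod--Sitnikov homology together with the topological observation that every compact subset of a cluster lies inside a sub-cluster of compacta, and only then invoke Milnor's compact-case result. In both halves one must moreover check that the directed colimit really commutes with the countable product involved --- valid here only because of the concrete, coordinatewise nature of the indexing systems, which is precisely what the interchange principle above isolates.
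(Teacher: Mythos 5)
Your proof is correct and follows essentially the same route as the paper: the heart in both cases is the interchange isomorphism $\colim\prod\to\prod\colim$ over a product of directed index sets, fed by compact supports, cofinality of sub-clusters of compacta, and Milnor's cluster additivity for compacta. The only differences are that you prove the interchange for arbitrary directed posets (so you avoid the paper's simplifying local-compactness assumption, which it explicitly notes is inessential) and you write out the \v Cech $\bigsqcup$-additivity via split covers, which the paper delegates to Spanier as ``well-known.''
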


The $\bigvee$-additivity of Steenrod--Sitnikov homology is well-known for compacta (cf.\ \cite{Mi2}), 
and the general case is addressed below.
The $\bigsqcup$-additivity of \v Cech cohomology is well-known \cite{Sp} and can be proved 
similarly.

\begin{proof} We need to show that $H_q(\bigvee_{m\in\N} X_m,\infty)\simeq\prod_{m\in\N} H_q(X_m,x_m)$.
Let us assume that the $X_m$ are locally compact; this simplifies notation but does not affect 
the essence of the argument.
In this case, it suffices to verify that if for each $m$ we are given a direct sequence $G_{m1}\to G_{m2}\to\dots$ 
of abelian groups, then the natural map 
\[\phi\:\colim\limits_{f\in\N^\N}\prod_{m\in\N}G_{m,f(m)}\to\prod_{m\in\N}\colim\limits_{n\in\N} G_{mn}\]
(sending the class of a sequence $(x_1,x_2,\dots)\in\prod_m G_{m,f(m)}$ into the sequence of the classes of 
the $x_i$, which is obviously well-defined) is an isomorphism.
(Compare \cite{HoTT}*{2.5.17}.)

Let us construct its inverse.
Let $[x_{mn}]$ be the class in $\colim_n G_{mn}$ of an element $x_{mn}\in G_{mn}$, and let 
$([x_{1,f(1)}],[x_{2,f(2)}],\dots)$ be a sequence of such classes for some $f\in\N^\N$.
Suppose that each $[x_{m,f(m)}]=[y_{m,g(m)}]$ for some $g\in\N^\N$.
Then $x_{m,f(m)}$ and $y_{m,g(m)}$ have equal images in some $G_{m,h(m)}$.
Here $h(m)\ge\max\big(f(m),g(m)\big)$ for each $m$, so $h\ge f$ and $h\ge g$.
Hence $(x_{1,f(1)},x_{2,f(2)},\dots)$ and $(y_{1,g(1)},y_{2,g(2)},\dots)$ have equal images in 
$\prod G_{m,h(m)}$ and thus determine the same element of the left hand side. 
\end{proof}

A cluster of the form $\bigvee_{i\in\N}(X_i\sqcup\{y_i\},y_i)$ is called the {\it null-sequence}
of the $X_i$ and will be denoted $\Big(\bigvee^+_{i\in\N} X_i,\,\infty\Big)$.
In the case of null-sequences, $\bigvee$-additivity specializes to:
\begin{itemize}
\item ($\bigvee^+$-additivity) if $\Big(\bigvee^+_{i\in\N} X_i,\,\infty\Big)$ and each $X_i$ are in $\C$, 
then for each $n$ the retraction maps $\Big(\bigvee^+_i X_i,\,\infty\Big)\to (X_k\sqcup\{\infty\},\infty)$ 
induce an isomorphism 
\[H_n\Big(\bigvee^+_{i\in\N} X_i,\,\infty\Big)\simeq\prod_{i\in\N} H_n(X_i),\]
respectively, 
\[H^n\Big(\bigvee^+_{i\in\N} X_i,\,\infty\Big)\simeq\bigoplus_{i\in\N} H^n(X_i).\]
\end{itemize}
In the presence of map excision, $\bigvee^+$-additivity is in fact equivalent to $\bigvee$-additivity.
Indeed, $\bigvee^+$-additivity implies its relative version using the five-lemma, and it remains to
consider the quotient map $\bigvee^+_i X_i\to\bigvee^+_i X_i/\bigvee^+_i\{x_i\}\cong\bigvee_i(X_i,x_i)$.
For a (co)homology theory on pairs of compacta, map excision is obviously equivalent to strong excision: 
for every compact pair $(X,A)$ the quotient map $(X,A)\to (X/A,\,pt)$ induces an isomorphism on (co)homology.

Singular homology and cohomology do not satisfy $\bigvee$-additivity on compacta (see \cite{M1}*{Example 5.6}).
They also do not satisfy map excision on pairs of compacta, since they are not invariants of fine shape 
(see \cite{M1}).

\subsection{What is a uniqueness theorem?}\label{general-uniqueness}
By adding to the Eilenberg--Steenrod and Milnor axioms some rather natural assumptions, one can in fact obtain
uniqueness theorems (some of them rather trivial) for fairly general categories of pairs.

An ordinary cohomology theory on pairs of polyhedra admits a unique ``continuous'' (in the sense of 
Mare\v si\' c resolutions) extension to all topological spaces and their P-embedded%
\footnote{A subset $A\subset X$ is called {\it P-embedded} if every continuous pseudometric on $A$
can be extended to a continuous pseudometric on $X$.
All subsets of $X$ are P-embedded if and only if $X$ is collectionwise normal \cite{Sh}.
(Collectionwise normal spaces include all paracompact Hausdorff spaces, which in turn include all metrizable
spaces.)
A subset $A\subset X$ is P-embedded if and only if there exists a polyhedral resolution of $(X,A)$
that restricts to a polyhedral resolution of $A$ \cite{MS}*{pp.\ 89--90}).}
subsets \cite{Wa}*{Theorem 7}.
By adding this continuity axiom to Milnor's axioms for the category of polyhedral pairs, one obtains
an axiomatic characterization of \v Cech cohomology on all topological spaces and their P-embedded subsets
\cite{Wa}*{Corollary 8(ii)}.
Alternatively, \v Cech cohomology is the only cohomology theory on paracompact Hausdorff spaces and their 
closed subsets that satisfies $\bigsqcup$-additivity, the vanishing of homology in negative dimensions, and 
a local vanishing condition: $\colim H^n(U,pt)=0$ over all neighborhoods $U$ of any fixed $pt\in X$ \cite{Bac}.

An ordinary homology theory on pairs compacta obviously admits a unique extension to all metrizable spaces and 
their closed subsets that is ``compactly supported'' in the sense that $H_n(X)=\colim H_n(K)$ over all compact 
$K\subset X$.
By adding this axiom of compact supports to Milnor's axioms for the category of pairs of compacta, one obtains
an axiomatic characterization of Steenrod--Sitnikov homology on metrizable spaces and their closed subsets
\cite{Pe1}*{Theorem 7}.
One can drop the restriction of metrizability by starting from an axiomatic characterization of homology
on pairs of compact Hausdorff spaces.
In fact, there do exist some fairly reasonable uniqueness theorems for homology theories on compact 
Hausdorff pairs.
Uniqueness is provided
\begin{itemize}
\item by the ``dual'' universal coefficient formula with respect to \v Cech cohomology,
$0\to\Ext\big(H^{n+1}(X),\,G)\to H_n(X;\,G)\to\Hom\big(H^n(X),\,G)\to 0$ \cite{Ber}; 
\item by strong excision, $\bigvee$-additivity and the vanishing of homology on all pairs that are acyclic with
respect to \v Cech cohomology (Berikashvili; see \cite{Sk5}*{\S7.2.4}); 
\item by strong excision, the Vietoris--Begle theorem, a restricted continuity axiom (for inverse limits of
finite wedges of spheres), and the vanishing of homology in negative dimensions \cite{Ber}; 
\item by the Milnor-type short exact sequence, cf.\ \cite{M-III}*{Proposition \ref{lim:milnor-ses}, 
Remark \ref{lim:lim2seq}} (Inasaridze--Mdzinarishvili; see \cite{BM}, \cite{Sk5}*{\S7.2.4}).
\end{itemize}

A considerable number of ordinary homology and cohomology theories actually constructed in the literature 
over the years eventually turned out to satisfy the above axioms, each time in a generality that appears 
to be natural for the theory in question (see \cite{Sk5}, \cite{Ber}, \cite{Sk6}).

Apart from the case of compact Hausdorff pairs, the above uniqueness theorems involve direct limits of 
abelian groups over uncountable directed sets.
Being an exact functor, direct limit even over a large directed set may seem harmless from a philosophical 
viewpoint.
This may be enough for some to philosophically accept the homology theory and the cohomology theory 
characterized by the above axioms (i.e., \v Cech cohomology and the extension of Steenrod--Sitnikov 
homology beyond metrizable spaces) as the ``right'' ones.
But from a computational viewpoint (and also from the viewpoint of logical complexity), direct limit is 
certainly far more complicated than $\bigoplus$ or $\prod$, since we are given no information about 
the bonding maps.
So if we want a ``constructive'' uniqueness theorem, which would provide a practical tool for computing 
a given (co)homology theory, then we definitely need a new, far more explicit characterization of
(co)homology beyond polyhedra and locally compact spaces.

\section{Controlled additivity axiom} \label{controlled additivity}

\subsection{The axiom}

If $M$ is a metrizable space and $X\subset M$ is closed, let us call a family of subsets $Y_i$ of 
$M\but X$ {\it scattered towards} $X$ if for any sequence of points $y_i\in Y_{n_i}$ that converges to an $x\in X$,
any of the following equivalent conditions holds:
\begin{itemize}
\item the diameters of the $Y_{n_i}$ tend to zero as $i\to\infty$, for some metric on $M$;
\item the diameters of the $Y_{n_i}$ tend to zero as $i\to\infty$, for every metric on $M$;
\item any other sequence of points $y'_i\in Y_{n_i}$ also converges to $x$.
\end{itemize} 

For the remainder of this subsection, let us assume that
\begin{enumerate}
\item $M$ is a metrizable space;
\item $X$ is a closed subspace of $M$; 
\item $M\but X$ is a union of its pairwise disjoint subspaces $C_i$, $i\in\N$, and moreover has the topology of the
disjoint union $\bigsqcup_{i\in\N} C_i$;
\item the $C_i$ are scattered towards $X$.
\end{enumerate}

Let us note that each $C_n$ is clopen not only in $M\but X$, but also in $M$.
Indeed, $C_n$ is open in $M$ due to (2).
If some sequence of points $y_i\in C_n$ converges to a point $y\notin C_n$, then $y\in X$ by (3).
But then the diameter of $C_n$ must be zero by (4).
Hence each $y_i=y_1$, but then also $y=y_1$, contradicting our hypothesis $y\notin C_n$.

Let $M_\odot$ be the quotient space of $M$ obtained by shrinking each $C_i$ to a point.
Since the indexing of the $C_i$ is assumed fixed, we can identify $M_\odot\but X$ with $\N$.
Clearly, any section $M_\odot\to M$ of the quotient map $M\to M_\odot$ is an embedding.

Let 
\[\kappa=\{K\subset\N\mid K\text{ has compact closure in }M_\odot\},\] 
\[\nu=\{U\subset\N\mid U\cup X\text{ is open in }M_\odot\}.\]
Of course, these are precisely the compact filtration and the neighborhood cofiltration 
of \S\ref{filtrations}.
Like before, instead of $\nu$ it is often more convenient to use the co-neighborhood filtration
\[\bar\nu=\{F\subset\N\mid F\text{ is closed in }M_\odot\}.\]
Clearly, $\kappa$ and $\bar\nu$ are ideals in the boolean algebra of all subsets of $\N$.

The controlled additivity axiom asserts that certain natural maps induce for each $n$ an isomorphism
\[H_n(M,X)\simeq\colim_{K\in\kappa}\prod_{i\in K}H_n(C_i)=\bigcup_{K\in\kappa}\prod_{i\in K}H_n(C_i)
\subset\prod_{i\in\N}H_n(C_i),\]
respectively,
\[H^n(M,X)\simeq\colim_{F\in\bar\nu}\prod_{i\in F}H^n(C_i)=\bigcup_{F\in\bar\nu}\prod_{i\in F}H^n(C_i)
\subset\prod_{i\in\N}H^n(C_i).\]
The two unions of products are known as ``$\bold K$-direct sums'' (where $\bold K=\kappa$, $\bar\nu$)
in the theory of abelian groups \cite{Fu}*{\S8}.

In more detail, for each $S\subset\N$ let us write $C_S=\bigcup_{i\in S}C_i$ and $\bar C_S$ for the closure 
of $C_S$ in $M$.
The inclusions $(\bar C_K,\,\bar C_K\cap X)\emb\big(M,X)$, where $K\in\kappa$, induce a map
\[\colim_{K\in\kappa} H_n(\bar C_K,\,\bar C_K\cap X)\xr{\phi_*}H_n(M,X).\]
Also, the retractions $(\bar C_K,\,\bar C_K\cap X)\to(C_i\sqcup pt,pt)$, where $i\in K$, induce a map
\[H_n(\bar C_K,\,\bar C_K\cap X)\xr{\phi^K_*}\prod_{i\in K}H_n(C_i).\]
Dually, the retractions $(M,X)\to(C_F\sqcup pt,pt)$, where $F\in\bar\nu$, induce a map
\[\colim_{F\in\bar\nu} H^n(C_F)\xr{\psi^*} H^n(M,X).\]
Also, the inclusions $C_i\emb C_F$, where $i\in F$, induce a map
\[H^n(C_F)\xr{\psi^*_F}\prod_{i\in F}H^n(C_i).\]

The controlled additivity axiom for a (co)homology theory $H$ on $\C$ requires that each pair 
$(\bar C_S,\,\bar C_S\cap X)$ be in $\C$, and that the maps $\phi_*$ and $\phi^K_*$, $K\in\kappa$
(respectively, $\psi^*$ and $\psi^*_F$, $F\in\bar\nu$) be isomorphisms.

\begin{example} When $X=\emptyset$, the controlled additivity axiom turns into $\bigsqcup$-additivity.
When $X=pt$ and $M_\odot$ is compact, the controlled additivity axiom turns into $\bigvee^+$-additivity.
\end{example}

\begin{proposition} Steenrod--Sitnikov homology and \v Cech cohomology satisfy the controlled 
additivity axiom on closed pairs of metrizable spaces.
\end{proposition}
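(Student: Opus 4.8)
The plan is to check the two halves of the axiom in turn, starting with \v Cech cohomology (the easier half) and then with Steenrod--Sitnikov homology; both rest on one elementary consequence of the scattering hypothesis~(4). Fix $x\in X$ and $\eps>0$: scattering provides $\delta>0$ such that every $C_i$ meeting $B(x,\delta)$ has diameter $<\eps-\delta$, hence lies in $B(x,\eps)$, so that $B(x,\delta)$ together with all such $C_i$ is a \emph{saturated} open set — a union of $X$ with a subfamily of the $C_i$, equivalently the preimage of an open subset of $M_\odot$ — contained in $B(x,\eps)$. Uniting over $x\in X$, every open neighbourhood $U$ of $X$ in $M$ contains a saturated open neighbourhood $O=M\but C_G$ of $X$; here $G=\N\but O$ is closed in $M_\odot$, i.e.\ $G\in\bar\nu$. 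Thus the clopen neighbourhoods $X\cup C_{\N\but F}$, $F\in\bar\nu$, are cofinal in the neighbourhood filter of $X$, and the analogous statement for a single point shows that, when $M_\odot$ is compact, $M/X\cong\bigvee^+_i C_i$.

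For cohomology I would begin from tautness of the closed subspace $X$ of the metrizable space $M$, $H^n(M,X)\cong\colim_{U\supset X}H^n(M,U)$ over open neighbourhoods $U$ of $X$ (\cite{Sp}), and by the previous paragraph compute this colimit over the cofinal family $U=X\cup C_{\N\but F}$, $F\in\bar\nu$. For such $U$ the complement $C_F=M\but U$ is clopen, so $M\to M/U\cong C_F\sqcup pt$ is a closed surjection that is a homeomorphism off $U$, and the map excision axiom gives $H^n(M,U)\cong H^n(C_F\sqcup pt,pt)\cong H^n(C_F)$ (the last step by additivity), naturally in $F$. Since each retraction $(M,X)\to(C_F\sqcup pt,pt)$ factors through $(M,U)$ and this quotient, the induced isomorphism $\colim_{F\in\bar\nu}H^n(C_F)\to H^n(M,X)$ is precisely $\psi^*$; and $\psi^*_F$ is an isomorphism because $C_F=\bigsqcup_{i\in F}C_i$ is a genuine disjoint union and \v Cech cohomology is $\bigsqcup$-additive~\cite{Sp}. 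No compactness of $M_\odot$ enters this half.

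For homology I would use the compact supports property $H_q(M,X)=\colim_L H_q(L,L\cap X)$ of Steenrod--Sitnikov homology over compact $L\subset M$. Given such an $L$, put $S=\{i:C_i\cap L\ne\emptyset\}$; then $\bar S$ is compact in $M_\odot$, so $S\in\kappa$, and $L_0=\bar C_S\cap L$ is compact with $L\but L_0\subset X$. Collapsing $L\cap X$ identifies $L/(L\cap X)$ with $L_0/(L_0\cap X)$, so strong excision (map excision for compact pairs) makes the inclusion induce $H_q(L_0,L_0\cap X)\cong H_q(L,L\cap X)$. Feeding this, and the remark that a class of $\colim_{K\in\kappa}H_q(\bar C_K,\bar C_K\cap X)$ represented over $\bar C_K$ is already represented over $\bar C_S$ for some $S\subset K$ in $\kappa$, into the colimit shows $\phi_*$ is an isomorphism; it remains to identify $H_q(\bar C_K,\bar C_K\cap X)$ with $\prod_{i\in K}H_q(C_i)$. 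For this I would note that the image of $\bar C_K$ in $M_\odot$ is the closure $\bar K$ of $K$, so $(\bar C_K,\bar C_K\cap X)$ again satisfies (1)--(4) but now with $(\bar C_K)_\odot=\bar K$ compact and with $\phi^K_*$ in the role of its own ``$\phi^\N_*$'' — reducing everything to the case $M_\odot$ compact. Then $X$ is compact, $M\to M/X$ is closed and a homeomorphism off $X$, and map excision gives $H_q(M,X)\cong H_q(M/X,pt)\cong H_q(\bigvee^+_i C_i,\infty)$ by the first paragraph, which is $\prod_i H_q(C_i)$ by $\bigvee$-additivity of Steenrod--Sitnikov homology (Theorem~\ref{10.0}); tracing the maps shows this composite is $\phi^\N_*$. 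One also observes that each pair $(\bar C_S,\bar C_S\cap X)$ is metrizable with closed base, hence in the category.

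The hard part will be the homology reduction of the previous paragraph. One must justify with care that collapsing $X$ really produces the null-sequence $\bigvee^+_i C_i$ — the scattering hypothesis is indispensable here, since without it the quotient need not be a cluster at all — and that each $(\bar C_K,\bar C_K\cap X)$ has compact associated space $\bar K$, so that the genuinely non-elementary ingredient, $\bigvee$-additivity of Steenrod--Sitnikov homology for \emph{possibly non-compact} summands, is available through Theorem~\ref{10.0} rather than only in its classical compact form (where the interchange of products and direct limits hidden inside that proof does not arise). On the cohomology side the one comparable subtlety is the cofinality of the clopen neighbourhoods $X\cup C_{\N\but F}$, which is again precisely where scattering is used.
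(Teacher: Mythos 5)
Your proposal is correct and follows essentially the same route as the paper: Spanier tautness plus the cofinality of saturated clopen neighbourhoods (the paper's Lemma \ref{cofinals}(a)) and (map) excision for $\psi^*$; compact supports plus Lemma \ref{cofinals}(b) and map excision for $\phi_*$; and $\prod$-additivity (Theorem \ref{10.0}) combined with strong excision for $\phi^K_*$ and $\psi^*_F$. Your extra details — rederiving the cofinality statements from the scattering condition and spelling out the collapse of $\bar C_K\cap X$ to obtain the null-sequence $\bigvee^+_{i\in K}C_i$ — are just expansions of steps the paper states tersely.
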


\begin{proof} That all $\phi_*^K$ and $\psi^*_F$ are isomorphisms is Milnor's $\prod$-additivity axioms
(along with strong excision, in the case of $\phi_*^K$).

Next, by Lemma \ref{cofinals}(b), every compact subset of $M$ lies in $C_K\cup L$ for some $K\in\kappa$ and 
some compact $L\subset X$.
The inclusion $(\bar C_K,\,\bar C_K\cap X)\emb\big(\bar C_K\cup L,\,(\bar C_K\cup L)\cap X\big)$ induces 
an isomorphism on homology by the map excision axiom.
Therefore the definition of Steenrod--Sitnikov homology implies that $\phi_*$ is an isomorphism.

Finally, by Lemma \ref{cofinals}(a), every neighborhood of $X$ contains $X\cup C_U$ for some $U\in\nu$.
Therefore by Spanier's tautness theorem (see \cite{Sp}*{Theorem 6.6.3}), the inclusions 
$(M,X)\emb (M,\,X\cup C_U)$ induce an isomorphism $\colim_{U\in\nu}H^n(M,\,X\cup C_U)\simeq H^n(M,X)$.
By map excision (or by usual excision, using that $C_{-U}$ is clopen in $M$),
the retraction map $(M,\,X\cup C_U)\to(C_{-U}\sqcup pt,pt)$ induces an isomorphism
$H^n(C_{-U}\sqcup pt,\,pt)\simeq H^n(M,\,X\cup C_U)$.
Since the composition $(M,X)\to (M,\,X\cup C_U)\to (C_{-U}\sqcup pt,\,pt)$ is the original retraction map,
$\psi^*$ is an isomorphism.
\end{proof}

\begin{lemma} \label{cofinals}
(a) Every neighborhood of $X$ in $M$ contains $C_U$ for some $U\in\nu$.

(b) Every compact subset of $M$ lies in $C_K\cup Q$ for some $K\in\kappa$ and some compact $Q\subset X$.
\end{lemma}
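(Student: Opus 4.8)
The plan is to produce the required $U\in\nu$ and $K\in\kappa$ by the obvious choices and then verify the membership conditions by hand, using only two inputs: the clopenness of each $C_i$ in $M$ (established just above the lemma) and the third reformulation of ``scattered towards $X$'', which says that if $m_j\in C_{k_j}$ converges to a point of $X$, then every companion sequence $z_j\in C_{k_j}$ converges to the same point. Throughout, $q\:M\to M_\odot$ denotes the quotient map, so that ``$U\in\nu$'' means $q^{-1}(U\cup X)=C_U\cup X$ is open in $M$, and ``$K\in\kappa$'' means $\bar K^{M_\odot}$ is compact.

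For (a), given a neighborhood $V$ of $X$ we may assume $V$ open (replace it by its interior, which still contains $X$) and set $U:=\{i\in\N\mid C_i\subseteq V\}$, so that $C_U\subseteq V$ automatically. To see $C_U\cup X$ is open in $M$, suppose not: since $M$ is first countable there is a point $p\in C_U\cup X$ and a sequence $m_j\to p$ with $m_j\notin C_U\cup X$. As $m_j\in M\but X=\bigsqcup_k C_k$, write $m_j\in C_{k_j}$ with $k_j\notin U$. The limit $p$ cannot lie in any $C_i$ with $i\in U$, since such a $C_i$ would be a neighborhood of $p$ disjoint from the tail of $(m_j)$; so $p\in X$. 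Also $\{k_j\}$ must be infinite, for otherwise a subsequence of $(m_j)$ lies in a single $C_k$, which is closed, forcing $p\in C_k$; so after passing to a subsequence the $k_j$ are distinct. Since $k_j\notin U$ we have $C_{k_j}\not\subseteq V$, so we may pick $z_j\in C_{k_j}\but V$; the scattered condition then gives $z_j\to p\in X\subseteq V$, contradicting $z_j\notin V$ for large $j$. Hence $U\in\nu$.

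For (b), given a compact $L\subseteq M$ I would take $Q:=L\cap X$, which is compact (closed in $L$), and $K:=\{i\in\N\mid C_i\cap L\ne\emptyset\}$, so that $L\subseteq C_K\cup Q$. Under the identification $M_\odot\but X=\N$ the set $K$ is contained in the compact set $q(L)$, so it suffices to show $q(L)$ is closed in $M_\odot$; then $\bar K^{M_\odot}$ is a closed subset of the compact space $q(L)$, hence compact. Equivalently I must show $q^{-1}(q(L))=L\cup C_K$ is closed in $M$, and this follows the same dichotomy as (a): a sequence in $L\cup C_K$ converging to $p\in M$ either has $p$ in some $C_j$, in which case its tail lies in $C_j$ and forces $j\in K$ (so $p\in C_K$), or $p\in X$, in which case either infinitely many terms lie in $L$, whence $p\in\bar L=L$, or else a subsequence lies in infinitely many distinct $C_{i_j}$ with $i_j\in K$, and choosing $b_j\in C_{i_j}\cap L$ the scattered condition yields $b_j\to p$, whence again $p\in L$. (The same argument, run for an arbitrary closed $A\subseteq M$, shows $q$ is a closed map, so $M_\odot$ is normal and $T_1$, hence Hausdorff; but only the special case above is needed.)

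I expect the only genuine point of care to be these two ``convergence dichotomy'' arguments, and specifically the bookkeeping that lets the clopenness of the $C_i$ dispose of limits inside $M\but X$ and reduces everything to limits in $X$, where condition~(4) --- applied along a subsequence chosen to make the indices distinct --- does the rest. No step requires anything beyond first countability of $M$ and the standing hypotheses (1)--(4).
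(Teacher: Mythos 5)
Your proof is correct, and your choices are exactly the paper's: $U=\{i\mid C_i\subset N\}$ for (a), and $K=\{i\mid C_i\cap L\ne\emptyset\}$, $Q=L\cap X$ for (b). The difference lies in how membership in $\nu$ and $\kappa$ is certified. For (a) the paper never checks openness of $C_U\cup X$ in $M$: it chooses a section $M_\odot\to M$ picking $x_i\in C_i$ with $x_i\notin N$ whenever $C_i\not\subset N$, notes that $U\cup X$ is the preimage of $N$ under this section, and invokes the fact (recorded just before the lemma) that any section of $M\to M_\odot$ is an embedding, so openness in $M_\odot$ is immediate. For (b) the paper simply observes that $K\cup Q$ is the image of the compact set under the continuous quotient map, hence compact, and concludes that $K$ has compact closure in $M_\odot$; this tacitly uses that $M_\odot$ is Hausdorff, so that the image is closed and contains the closure of $K$. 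Your two ``convergence dichotomy'' arguments, run directly in $M$ from the scattered condition and the clopenness of the $C_i$, replace both of these ambient inputs: in effect you verify by hand what the section-embedding fact and the Hausdorffness of $M_\odot$ encapsulate, and as a byproduct you show that the quotient map is closed. The paper's route is shorter because it delegates the pointwise work to those previously stated properties of $M_\odot$; yours is more self-contained, using nothing about $M_\odot$ beyond the definition of the quotient topology, at the cost of repeating essentially the same scatteredness argument twice. Both are valid proofs of the lemma.
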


\begin{proof}[Proof. (a)]
Let $N$ be the given neighborhood; we may assume that it is open.
Let $U=\{i\in\N\mid C_i\subset N\}$, then $C_U\subset N$.
Let us embed $M_\odot$ in $M$ by picking one point $x_i$ in each $C_i$, so that if $C_i\not\subset N$, then 
$x_i\notin N$.
Then $U\cup X$ is the preimage of $N$ under this embedding.
Hence $U\cup X$ is open in $M_\odot$, so $U\in\nu$.
\end{proof}

\begin{proof}[(b)]
Let $C$ be the given compact subset and let $K=\{i\in\N\mid C_i\cap C\ne\emptyset\}$.
Then $C\subset C_K\cup Q$, where $Q:=C\cap X$ is compact.
Also $K\cup Q$ is compact since it is the image of $C$ under the quotient map $M\to M_\odot$.
Hence $K$ has compact closure in $M_\odot$, so $K\in\kappa$.
\end{proof}

Let us note that Lemma \ref{cofinals} and \cite{M-I}*{Proposition \ref{fish:duality}} 
have the following consequence:

\begin{corollary} \label{discrete duality}
Let $S\subset\N$.

(a) $S\in\bar\nu$ if and only if $S$ meets every member of $\kappa$ in a finite set.

(b) $S\in\kappa$ if and only if $S$ meets every member of $\bar\nu$ in a finite set. 
\end{corollary}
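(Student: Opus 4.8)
The plan is to deduce both equivalences from the duality lemma \cite{M-I}*{Proposition \ref{fish:duality}}, with Lemma \ref{cofinals} serving as the dictionary that identifies the set-theoretic filtrations $\kappa$ and $\nu$ on $\N$ with, respectively, the filtration of $M$ by its compact subsets and the filtration of $X$ by its neighborhoods in $M$; once this identification is in place, (a) and (b) are precisely the discrete incarnation of that lemma's Galois-type symmetry. Before carrying this out I would record two structural facts already latent in \S\ref{controlled additivity}. First, $M_\odot$ is metrizable, since as noted there it embeds into the metrizable space $M$ via a section of $M\to M_\odot$. Second, each singleton $\{i\}\subset M_\odot$ is clopen, its preimage $C_i$ being clopen in $M$ by the computation given there, so $\N$ lies in $M_\odot$ as an open, discrete subspace; hence any accumulation point in $M_\odot$ of a subset of $\N$ must lie in $X$, and for $S\subset\N$ we have $S\in\bar\nu$ exactly when $S$ has no accumulation point in $X$, while $S\in\kappa$ exactly when $S$ together with its (necessarily $X$-valued) accumulation points is compact.

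Granting these remarks, the forward implications are the easy halves and can be done by hand. Suppose $S\in\bar\nu$ and $K\in\kappa$ but $S\cap K$ is infinite. As an infinite subset of the compact space $\overline K$ (closure in $M_\odot$), $S\cap K$ has an accumulation point $x\in\overline K$; by discreteness of $\N$ we get $x\notin\N$, so $x\in X$; but $x$ is an accumulation point of $S$, which is closed, so $x\in S\subset\N$, a contradiction. Interchanging ``closed'' and ``compact closure'' gives the forward implication of (b) verbatim.

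For the converse implications I would argue by contraposition, exhibiting an explicit witnessing set. If $S\notin\bar\nu$, choose $x\in\overline S\setminus S$; then $x\in X$, and since $M_\odot$ is metrizable there is a sequence of distinct points $s_j\in S$ with $s_j\to x$. Setting $K:=\{s_j\mid j\in\N\}$, the closure of $K$ in $M_\odot$ is the convergent sequence $K\cup\{x\}$, hence compact, so $K\in\kappa$, yet $S\cap K=K$ is infinite. Dually, if $S\notin\kappa$, then $\overline S$ fails to be sequentially compact, so $S$ contains a sequence of distinct points with no accumulation point in $M_\odot$; the set $F$ of its terms is then a closed subset of $M_\odot$ contained in $\N$, i.e.\ $F\in\bar\nu$, and again $S\cap F=F$ is infinite. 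This completes both contrapositives.

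There is no real obstacle here: the one place where the standing hypotheses (1)--(4) of \S\ref{controlled additivity} genuinely enter is the step turning ``$\overline K$ compact'' into ``$S\cap K$ finite'' (and, conversely, building a compact closure out of a convergent sequence), which uses exactly that $M_\odot$ is metrizable and that the points of $\N$ are isolated in it. The substantive content---that ``finite intersection'' sets up an honest duality between $\kappa$ and $\bar\nu$---is what was abstracted and proved in Part I as \cite{M-I}*{Proposition \ref{fish:duality}}; the elementary computation above is simply its unwinding in the present discrete situation, and I expect the author's proof to phrase the whole corollary as a short appeal to that proposition through Lemma \ref{cofinals}.
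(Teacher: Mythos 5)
Your argument is correct, but it is not the paper's proof: the paper disposes of Corollary \ref{discrete duality} in one line, as a formal consequence of Lemma \ref{cofinals} together with the duality lemma of Part I (\cite{M-I}*{Proposition \ref{fish:duality}}), exactly as you predicted in your closing sentence; Lemma \ref{cofinals} identifies $\kappa$ and $\nu$ with cofinal data coming from compact subsets of $M$ and neighborhoods of $X$, and the Part I proposition then delivers the duality, with ``finite'' appearing because compact subsets of the discrete set $\N$ are finite. What you actually wrote out, by contrast, is a self-contained point-set proof inside the metrizable space $M_\odot$, using only that $\N$ sits in $M_\odot$ as an open discrete subspace (so all accumulation points of subsets of $\N$ lie in $X$) and the standard compactness/sequence arguments; this buys independence from Part I for this particular corollary (note the paper states that the duality lemma is the only result of Part I essentially used here, and it is cited again for Corollary \ref{simplicial duality}, so nothing is lost globally), at the cost of redoing elementarily what Part I abstracted. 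Your forward implications and the contrapositive of (a) are complete as written. The only step worth tightening is the contrapositive of (b): non-compactness of the closure $\overline S$ gives a sequence with no convergent subsequence whose terms a priori lie in $\overline S$ rather than in $S$; one should either approximate each term by a nearby point of $S$ (a subsequential limit of the approximants would force one for the original sequence), or invoke the standard fact that in a metric space a set whose closure is non-compact contains an infinite subset without accumulation points. With that one-line repair, the infinite set $F\subset S$ is indeed closed in $M_\odot$, hence in $\bar\nu$, and your proof is complete.
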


\subsection{Dual formulation}

The controlled additivity axiom can be dualized.
The dual version asserts that, under the same hypotheses, certain natural maps induce 
for each $n$ an isomorphism
\[H_n(M,X)\simeq\lim_{F\in\bar\nu}\bigoplus_{i\in F}H_n(C_i)=\bigcap_{F\in\bar\nu}
\left(\prod_{i\notin F}H_n(C_i)\oplus\bigoplus_{i\in F}H_n(C_i)\right)\subset\prod_{i\in\N}H_n(C_i),\]
respectively,
\[H^n(M,X)\simeq\lim_{K\in\kappa}\bigoplus_{i\in K}H^n(C_i)=\bigcap_{K\in\kappa}
\left(\prod_{i\notin K}H^n(C_i)\oplus\bigoplus_{i\in K}H^n(C_i)\right)\subset\prod_{i\in\N}H^n(C_i).\]

Namely, the retractions $(M,X)\to(C_F\sqcup pt,pt)$, where $F\in\bar\nu$, induce a map
\[H_n(M,X)\xr{\psi_*}\lim_{F\in\bar\nu} H_n(C_F).\]
Also, the inclusions $C_i\emb C_F$, where $i\in F$, induce a map
\[\bigoplus_{i\in F}H_n(C_i)\xr{\psi_*^F}H_n(C_F).\]
The maps $\psi_*$ and $\psi_*^F$, $F\in\bar\nu$, are required to be isomorphisms.

Dually, the inclusions $(\bar C_K,\,\bar C_K\cap X)\emb\big(M,X)$, where $K\in\kappa$, induce a map
\[H^n(M,X)\xr{\phi^*}\lim_{K\in\kappa} H^n(\bar C_K,\,\bar C_K\cap X).\]
Also, the retractions $(\bar C_K,\,\bar C_K\cap X)\to(C_i\sqcup pt,pt)$, where $i\in K$, induce a map
\[\bigoplus_{i\in K}H^n(C_i)\xr{\phi^*_K}H^n(\bar C_K,\,\bar C_K\cap X).\]
The maps $\phi^*$ and $\phi^*_K$, $K\in\kappa$, are required to be isomorphisms.

\begin{remark} The controlled additivity axiom consists of two requirements: an equivalent form 
(modulo map excision) of $\prod$-additivity and a strengthened form of $\bigoplus$-additivity.
The dualized controlled additivity axiom also consists of two requirements: an equivalent form 
(modulo map excision) of $\bigoplus$-additivity and a strengthened form of $\prod$-additivity.
But in fact, there is no asymmetry, due the following
\end{remark}

\begin{proposition} \label{dualized}
Dualized controlled additivity is equivalent to controlled additivity,
modulo Milnor's three axioms.
\end{proposition}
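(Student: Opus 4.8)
The plan is to show that controlled additivity implies dualized controlled additivity, and vice versa, using only Milnor's three axioms (map excision, $\bigsqcup$-additivity, $\bigvee$-additivity) as extra machinery. The key observation is that both axioms compute the same relative group $H_n(M,X)$ (resp.\ $H^n(M,X)$) as a subgroup of $\prod_i H_n(C_i)$; the two formulations differ only in \emph{how} that subgroup is described — once as a $\kappa$-direct sum and once as a $\bar\nu$-direct sum — and these descriptions, for homology, are the same subgroup of $\prod_i H_n(C_i)$ by Corollary~\ref{discrete duality} together with the purely algebraic identity $\bigcup_{K\in\kappa}\prod_{i\in K}A_i=\bigcap_{F\in\bar\nu}(\prod_{i\notin F}A_i\oplus\bigoplus_{i\in F}A_i)$, valid because $\kappa$ and $\bar\nu$ are ideals that are dual in the sense of Corollary~\ref{discrete duality}. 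So the real content is matching up the various natural maps ($\phi_*,\phi^K_*$ on one side, $\psi_*,\psi^F_*$ on the other) and checking that one set being isomorphisms forces the other set to be isomorphisms.

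First I would dispose of the ``$\prod$-additivity part'' symmetrically: the claim that all $\phi^K_*$ are isomorphisms is, modulo map excision and strong excision (which follows from map excision for compact pairs), just Milnor's $\prod$-additivity for the finite-type pieces $(\bar C_K,\bar C_K\cap X)$; likewise for $\psi^F_*$, $\phi^*_K$, $\psi^*_F$. So these four families of maps are isomorphisms for \emph{any} theory satisfying Milnor's three axioms, and can be assumed throughout. That reduces both axioms to a single statement each: controlled additivity says $\phi_*\colon \colim_{K\in\kappa}H_n(\bar C_K,\bar C_K\cap X)\to H_n(M,X)$ is an isomorphism (and dually $\psi^*$), while dualized controlled additivity says $\psi_*\colon H_n(M,X)\to \lim_{F\in\bar\nu}H_n(C_F)$ is an isomorphism (and dually $\phi^*$).

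Next I would set up the commuting diagram that links $\phi_*$ and $\psi_*$. Using that $\phi^K_*$ and $\psi^F_*$ are already isomorphisms, I can replace $H_n(\bar C_K,\bar C_K\cap X)$ by $\prod_{i\in K}H_n(C_i)$ and $H_n(C_F)$ by $\bigoplus_{i\in F}H_n(C_i)$. Then $\phi_*$ becomes a map $\colim_{K\in\kappa}\prod_{i\in K}H_n(C_i)\to H_n(M,X)$ and $\psi_*$ becomes $H_n(M,X)\to\lim_{F\in\bar\nu}\bigoplus_{i\in F}H_n(C_i)$, and the composite $\psi_*\circ\phi_*$ is the canonical map $\colim_{K\in\kappa}\prod_{i\in K}H_n(C_i)\to\lim_{F\in\bar\nu}\bigoplus_{i\in F}H_n(C_i)$, which is an isomorphism by the algebraic lemma and Corollary~\ref{discrete duality} above. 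Hence if $\phi_*$ is an isomorphism then $\psi_*$ is, and conversely if $\psi_*$ is an isomorphism then $\phi_*$ is \emph{surjective}; for injectivity of $\phi_*$ one checks separately that the map into $\prod_{i\in\N}H_n(C_i)$ (which factors through $\psi_*$ followed by the inclusion $\lim_{F\in\bar\nu}\bigoplus_{i\in F}H_n(C_i)\hookrightarrow\prod_i H_n(C_i)$) is already injective on $\colim_{K\in\kappa}\prod_{i\in K}H_n(C_i)$, which is immediate from the explicit description of that colimit. The cohomological halves run dually, exchanging $\kappa\leftrightarrow\bar\nu$ and $\colim\leftrightarrow\lim$, $\prod\leftrightarrow\bigoplus$.

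The main obstacle I anticipate is not the abstract diagram-chase but verifying that the naturality squares genuinely commute at the level of the geometrically-defined maps — in particular, that the retraction $(M,X)\to(C_F\sqcup pt,pt)$ is compatible, after passing to the $\colim$ over $K\in\kappa$, with the inclusions $(\bar C_K,\bar C_K\cap X)\emb(M,X)$ and the retractions $(\bar C_K,\bar C_K\cap X)\to(C_i\sqcup pt,pt)$, so that the composite really is the canonical $\kappa$-to-$\bar\nu$ comparison map and not merely equal to it up to some automorphism. This requires being careful about which sections $M_\odot\to M$ are used and invoking the homotopy axiom to see that different choices give the same induced maps; once that bookkeeping is done, the rest is the formal argument sketched above.
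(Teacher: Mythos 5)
Your proposal is correct and follows essentially the same route as the paper: reduce to the four $\prod$/$\bigoplus$-comparison maps being isomorphisms via Milnor's axioms, then observe that the composite $\kappa$-to-$\bar\nu$ comparison map is the inclusion between two subgroups of $\prod_i H_n(C_i)$ which coincide by Corollary~\ref{discrete duality}, and conclude by a commutative-diagram two-out-of-three argument. The only cosmetic difference is that your separate injectivity/surjectivity bookkeeping at the end is redundant, since once the composite and one factor are isomorphisms the other factor is automatically an isomorphism.
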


\begin{proof} The diagrams
\[\begin{CD}
\colim\limits_{K\in\kappa} H_n(\bar C_K,\,\bar C_K\cap X)@>\phi_*>>H_n(M,X)@>\psi_*>>\lim\limits_{F\in\bar\nu} H_n(C_F)\\
@VV{\colim\phi^K_*}V@.@A{\lim\psi_*^F}AA\\
\colim\limits_{K\in\kappa}\prod\limits_{i\in K}H_n(C_i)@.\xr{\qquad\chi_*\qquad}@.
\lim\limits_{F\in\bar\nu}\bigoplus\limits_{i\in F}H_n(C_i)
\end{CD}\]
and
\[\begin{CD}
\colim\limits_{F\in\bar\nu} H^n(C_F)@>\psi^*>> H^n(M,X)@>\phi^*>>\lim\limits_{K\in\kappa} H^n(\bar C_K,\,\bar C_K\cap X)\\
@VV{\colim\psi^*_F}V@.@A{\lim\phi^*_K}AA\\
\colim\limits_{F\in\bar\nu}\prod\limits_{i\in F}H^n(C_i)@.\xr{\qquad\chi^*\qquad}@.
\lim\limits_{K\in\kappa} \bigoplus\limits_{i\in K}H^n(C_i).
\end{CD}\] 
are commutative (we leave the verification to the reader).

The four vertical arrows are isomorphisms by Milnor's axioms.
Of the two arrows in each top line, one is assumed to be an isomorphism and the other needs to be proved
to be an isomorphism.
Thus it remains to show that the natural maps $\chi_*$ and $\chi^*$ in the bottom line are isomorphisms. 

The {\it support} of an element $(g_i)\in\prod_{i\in\N}G_i$, where the $G_i$ are abelian, is
$\{i\in\N\mid g_i\ne0\}$.
If regarded as a map between subgroups of $\prod_{i\in\N}H_n(C_i)$ (resp.\ $\prod_{i\in\N}H^n(C_i)$),
$\chi_*$ (resp.\ $\chi^*$) is the inclusion map (we leave the verification to the reader).

Now, $\colim_{K\in\kappa}\prod_{i\in K}H_n(C_i)$ is the subgroup of $\prod_{i\in\N}H_n(C_i)$
consisting of all $(g_i)$ whose support lies in some $K\in\kappa$.
Also, $\lim_{F\in\bar\nu}\bigoplus_{i\in F}H_n(C_i)$ is the subgroup of
$\prod_{i\in\N}H_n(C_i)$ consisting of all $(g_i)$ whose support meets the every $F\in\bar\nu$ in a finite set.
By Corollary \ref{discrete duality}(b), these two subgroups coincide.

Similarly, $\colim_{F\in\bar\nu}\prod_{i\in F}H^n(C_i)$ is the subgroup of $\prod_{i\in\N}H^n(C_i)$
consisting of all $(g_i)$ whose support lies in some $F\in\bar\nu$.
Also, $\lim_{K\in\kappa} \bigoplus_{i\in K}H^n(C_i)$ is the subgroup of
$\prod_{i\in\N}H^n(C_i)$ consisting of all $(g_i)$ whose support meets every $K\in\kappa$
in a finite set.
By Corollary \ref{discrete duality}(a), these two subgroups coincide.
\end{proof}

\subsection{Examples} 

\begin{example}\label{10.3} Let $\tilde S^n$ denote the $n$-sphere if $n>0$ and a single point (and not two points) 
if $n=0$. 
(This ``reduced $n$-sphere'' is a Moore space $M_n(\Z)$ and also a co-Moore space $M^n(\Z)$.)
Let $S=\tilde S^n\x\N$, where $\N$ denotes the countable discrete space.
If $Y$ is a locally compact separable metrizable space, let $Y^+=Y\cup\{\infty\}$ denote its one-point 
compactification.
The set $\N^\N$ of all functions $\N\to\N$ is partially ordered by $f\le g$ iff $f(n)\le g(n)$ for each $n\in\N$.
We will denote by $\Z_{ij}$ merely a copy of $\Z$.

(a) Let $M=S^+\x\N$ and $X=\{\infty\}\x\N$.
The connected components of $M\but X$ are naturally indexed by $\N\x\N$, and $\kappa$ contains a countable cofinal 
subset $\kappa_\N$ consisting of the products $K_j:=\N\x\{1,\dots,j\}$, $j\in\N$.
Also, $\bar\nu$ contains a cofinal subset $\bar\nu_0$ consisting of the regions $F_f:=\{(i,j)\mid i\le f(j)\}$, $f\in\N^\N$.
See Figure \ref{kappa-nu}(a).

The composite isomorphism 
\[\colim\limits_{K\in\kappa_\N}\prod\limits_{i\in K}H_n(C_i)\simeq
\colim\limits_{K\in\kappa}\prod\limits_{i\in K}H_n(C_i)\simeq H_n(M,X)\simeq
\lim\limits_{F\in\bar\nu}\bigoplus\limits_{i\in F}H_n(C_i)\simeq
\lim\limits_{F\in\bar\nu_0}\bigoplus\limits_{i\in F}H_n(C_i)\]
takes the form
\[\bigoplus_{j\in\N}\prod_{i\in\N}\Z_{ij}\simeq\lim_{f\in\N^\N}\bigoplus_{j\in\N}\bigoplus_{i\le f(j)}\Z=
\bigcap_{f\in\N^\N}\Big(\prod_{j\in\N}\prod_{i>f(j)}\Z_{ij}
\oplus\bigoplus_{j\in\N}\bigoplus_{i\le f(j)}\Z_{ij}\Big)\subset\prod_{(i,j)\in\N^2}\Z_{ij},\tag{$*$}\]
where the bonding maps $\bigoplus_j\bigoplus_{i\le f(j)}\Z\to\bigoplus_j\bigoplus_{i\le g(j)}\Z$, $f\ge g$,
are given by the coordinatewise projections $\bigoplus_{i\le f(j)}\Z\to\bigoplus_{i\le g(j)}\Z$ 
onto the first $g(j)$ factors.
One can rewrite ($*$) as
\[\big(\Z[[x]]\big)[y]\simeq\lim_{f\in\N^\N}\bigoplus_{n=0}^\infty y^n\Z[x]\big/\big<x^{f(n)}\big>=
\bigcap_{f\in\N^\N}\Big(\Z[x,y]+\prod_{n=0}^\infty x^{f(n)}y^n\Z[[x]]\Big)
\subset\Z[[x,y]].\]

The composite isomorphism
\[\lim\limits_{K\in\kappa_\N}\bigoplus\limits_{i\in K}H^n(C_i)\simeq
\lim\limits_{K\in\kappa} \bigoplus\limits_{i\in K}H^n(C_i)\simeq H^n(M,X)\simeq
\colim\limits_{F\in\bar\nu}\prod\limits_{i\in F}H^n(C_i)\simeq
\colim\limits_{F\in\bar\nu_0}\prod\limits_{i\in F}H^n(C_i)\]
takes the form
\[\prod_{j\in\N}\bigoplus_{i\in\N}\Z_{ij}\simeq\colim_{f\in\N^\N}\prod_{j\in\N}\prod_{i\le f(j)}\Z=
\bigcup_{f\in\N^\N}\prod_{j\in\N}\prod_{i\le f(j)}\Z_{ij}\subset\prod_{(i,j)\in\N^2}\Z_{ij},\tag{$**$}\]
where the bonding maps $\prod_j\prod_{i\le g(j)}\Z\to\prod_j\prod_{i\le f(j)}\Z$, $g\le f$,
are given by the coordinatewise inclusions $\prod_{i\le g(j)}\Z\to\prod_{i\le f(j)}\Z$ onto the first $g(j)$ 
factors.
One can rewrite ($**$) as
\[\big(\Z[x]\big)[[y]]\simeq
\bigcup_{f\in\N^\N}\Big\{\sum_{n=0}^\infty P_ny^n\ \Big|\ P_n\in\Z[x],\,\deg P_n\le f(n)\Big\}
\subset\Z[[x,y]].\]

\begin{figure}[h]
\includegraphics[width=15cm]{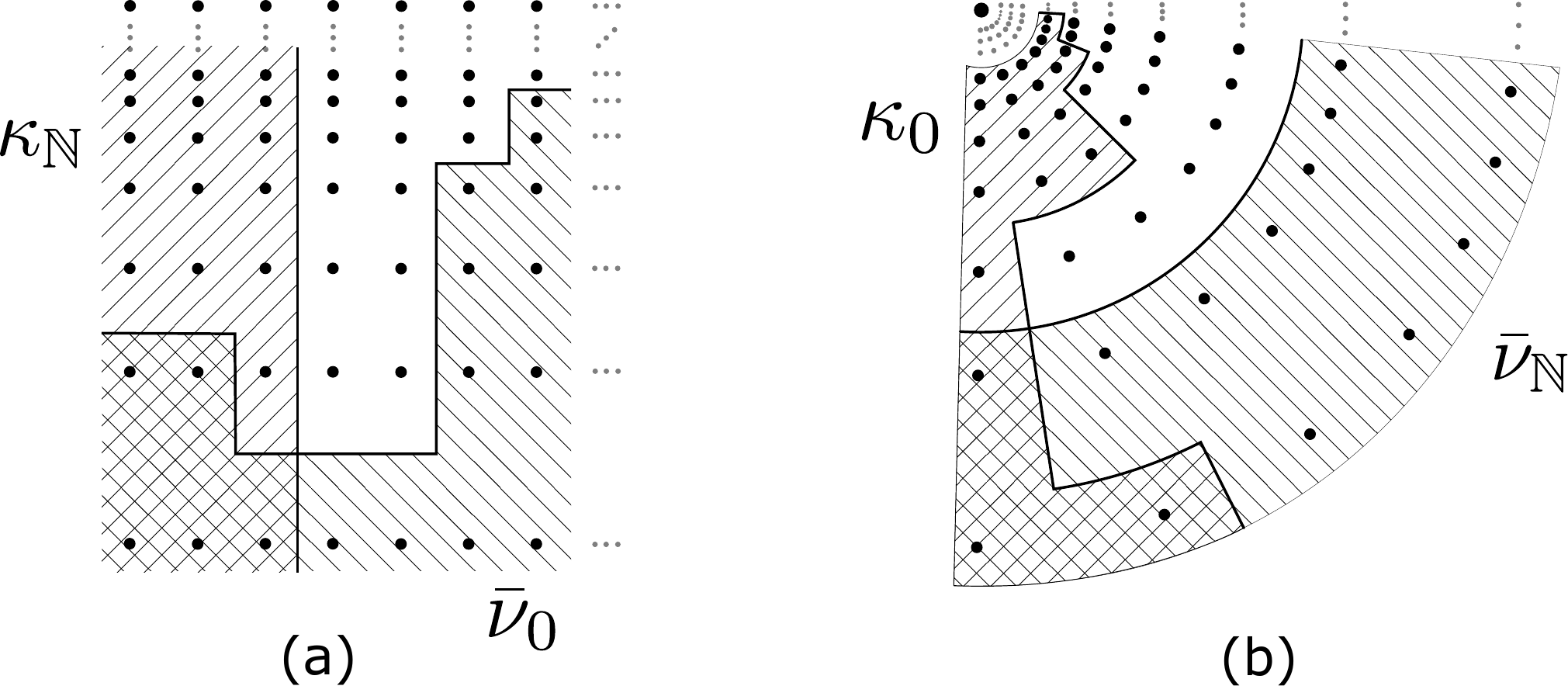}
\caption{Cofinal subsets of $\kappa$ and $\bar\nu$ for $(M,X)$ and $(M/X,\{X\})$ where
$M=\mathbb N\times\mathbb N^+$ and $X=\mathbb N\times\{\infty\}$}
\label{kappa-nu}
\end{figure}

(b) Now let $M$ be the metric quotient $(S^+\x\N)/(\{\infty\}\x\N)$, which has the topology of 
the quotient uniformity (and not the quotient topology, which is non-metrizable; see 
\cite{M2}*{\S\ref{metr:quotients}} or \cite{M-III}*{\S\ref{lim:metric quotient}} for the details).
For example, in the case $n=0$ it is homeomorphic to the subset $\{(0,0)\}\cup (C\cap R)$ of the square 
$[0,1]\x[0,1]\subset\R^2$, where $C$ is the the union of all circles $x^2+y^2=\frac1n$, $n\in\N$, and 
$R$ is the union of all radial lines $y=kx$, $k\in\N$.
Let $X$ be the point $\{\{\infty\}\x\N\}$ of $M$; let us note that $M$ is not locally compact at this point. 

The connected components of $M\but X$ are naturally indexed by $\N\x\N$, and $\bar\nu$ contains a countable 
cofinal subset $\bar\nu_\N$ consisting of the products $F_i:=\{1,\dots,i\}\x\N$, $i\in\N$.
Also, $\kappa$ contains a cofinal subset $\kappa_0$ consisting of the regions $K_f:=\{(i,j)\mid j\le f(i)\}$, 
$f\in\N^\N$. 
See Figure \ref{kappa-nu}(b).

The composite isomorphism 
\[\lim\limits_{F\in\bar\nu_\N}\bigoplus\limits_{i\in F}H_n(C_i)\simeq
\lim\limits_{F\in\bar\nu}\bigoplus\limits_{i\in F}H_n(C_i)\simeq H_n(M,X)\simeq
\colim\limits_{K\in\kappa}\prod\limits_{i\in K}H_n(C_i)\simeq
\colim\limits_{K\in\kappa_0}\prod\limits_{i\in K}H_n(C_i)\]
takes the form ($**$).
The composite isomorphism
\[\colim\limits_{F\in\bar\nu_\N}\prod\limits_{i\in F}H^n(C_i)\simeq
\colim\limits_{F\in\bar\nu}\prod\limits_{i\in F}H^n(C_i)\simeq H^n(M,X)\simeq
\lim\limits_{K\in\kappa} \bigoplus\limits_{i\in K}H^n(C_i)\simeq
\lim\limits_{K\in\kappa_0} \bigoplus\limits_{i\in K}H^n(C_i)\]
takes the form ($*$).
\end{example}

\begin{example} \label{non-closed}
Let $(M,X)$ be as in Example \ref{10.3}(a).
The inclusion $j\:M\but X\to M$ is not a closed map since $M\but X$ is not closed in $M$.
The projection $p\:M\to M/X$ onto the metric quotient (as in Example \ref{10.3}(b)) is also not a closed map, 
since $\{(n,n)\mid n\in\N\}$, regarded as a subset of $\N\x\N=M\but X$, is closed in $M$.
The continuous bijection $b\:M/X\to(M\but X)^+$ with the one-point compactification is again not a closed map,
since $\{(1,n)\mid n\in\N\}$ is closed in $M/X$.
Each of $j$, $p$ and $b$ induces a non-surjective injection in homology and in cohomology:
\[\bigoplus_{i\in\N\x\N}\Z\xrightarrow[b^*]{j_*}\bigoplus_{m\in\N}\prod_{n\in\N}\Z\xrightarrow[p^*]{p_*}
\prod_{m\in\N}\bigoplus_{n\in\N}\Z\xrightarrow[j^*]{b_*}\prod_{i\in\N\x\N}\Z.\]
\end{example}

\section{Uniqueness theorem} \label{uniqueness-chapter}

We are now ready to state the main result of this paper.

\begin{theorem} \label{uniqueness}
If $H$ and $H'$ are ordinary (co)homology theories on the category $C_{\nu\kappa}$ of 
closed pairs of Polish spaces satisfying map excision and controlled additivity, then any graded isomorphism 
$H(pt)\to H'(pt)$ extends to a natural equivalence from $H$ to $H'$.
\end{theorem}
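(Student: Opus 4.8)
The plan is to reduce to the polyhedral case, where the classical Eilenberg--Steenrod/Milnor machinery determines everything, and then to bootstrap to an arbitrary closed pair of Polish spaces by resolving it into polyhedral pairs in such a way that controlled additivity and map excision let one compare $H$ and $H'$ stage by stage. Observe first that the case $X=\emptyset$ of the Controlled Additivity Axiom is $\bigsqcup$-additivity, so $H$ and $H'$ are $\bigsqcup$-additive ordinary theories satisfying map excision; restricting to the full subcategory of $C_{\nu\kappa}$ consisting of closed pairs of locally finite countable simplicial complexes with the metric topology (these are locally compact, hence Polish), the Eilenberg--Steenrod uniqueness theorem, extended to countable complexes by Milnor's mapping-telescope argument (\S\ref{Milnor axioms}), shows that the given graded isomorphism $\theta_0\colon H(pt)\to H'(pt)$ extends uniquely to a natural equivalence $\theta$ on this polyhedral subcategory. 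It is convenient to record this in the form: on polyhedral pairs, $H_n(K,L)$ and $H^n(K,L)$ are the (co)homology of the simplicial (co)chain complex of $(K,L)$ with coefficients in $H_0(pt)$, functorially and with no reference to the particular theory, and $\theta$ is induced by $\theta_0$ on coefficients.

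Next I would attach to each closed pair $(X,A)$ of Polish spaces a combinatorial model. Fixing a complete metric and a tower $\mathcal U_1\succ\mathcal U_2\succ\cdots$ of locally finite open covers of mesh tending to zero, refined so that they restrict to a resolution of $A$, and passing to nerves, one obtains an inverse sequence of locally finite simplicial pairs in $C_{\nu\kappa}$. The essential point --- and the place where the duality lemma \cite{M-I}*{Lemma \ref{fish:duality}} and its combinatorial shadow Corollary \ref{discrete duality} enter --- is that the ``difference'' between consecutive nerves can be organized as a disjoint union of small pieces scattered towards the coarser stratum, so that hypotheses (1)--(4) of the Controlled Additivity Axiom hold at each stage. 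Alternating finitely many applications of controlled additivity with applications of map excision (the latter to collapse subspaces, as in the passage from $(M,X)$ to $(M/X,pt)$) then peels $(X,A)$ onto the polyhedral pairs of the tower and assembles the data into a cellular (co)chain complex $\mathcal C_\bullet(X,A)$ of the ``finite in one direction, possibly infinite in the other'' type described in the introduction, carrying the $\lim\colim=\colim\lim$ symmetry. The target of this step is the statement that, for \emph{every} theory $H$ as in the hypothesis, there are natural isomorphisms $H_n(X,A)\cong H_n\big(\mathcal C_\bullet(X,A);H_0(pt)\big)$ and $H^n(X,A)\cong H^n\big(\mathcal C_\bullet(X,A);H_0(pt)\big)$; it is precisely the internal symmetry of $\mathcal C_\bullet$ that allows a single such formula, with no $\derlim$ correction term.

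The comparison is then formal. Since $\mathcal C_\bullet(X,A)$ is built functorially out of the polyhedral pairs of the tower and the structure maps between them, all of which lie in the polyhedral subcategory, the natural equivalence $\theta$ of the first step induces a chain isomorphism between the models computed via $H$ and via $H'$, hence an isomorphism $H(X,A)\to H'(X,A)$ extending $\theta_0$; taking $X=pt$ recovers $\theta_0$. Naturality in $(X,A)$ follows from the corresponding naturality of resolutions (a map of Polish pairs induces, up to the equivalences already present, a map of resolving towers), so these isomorphisms fit together into the desired natural equivalence $H\to H'$.

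The heart of the matter is the second step. Three points require care: (i) choosing the nerve tower so that the successive differences genuinely satisfy conditions (1)--(4) --- that the pieces are scattered towards the coarser stratum and carry the disjoint-union topology; (ii) making the inductive peeling terminate with purely polyhedral data after finitely many alternating uses of controlled additivity and map excision, which demands keeping exact track of which index subsets lie in the compact filtration $\kappa$ versus the co-neighborhood filtration $\bar\nu$; and (iii) proving the exchange of $\lim$ and $\colim$ that makes $\mathcal C_\bullet(X,A)$ well defined and makes its plain (co)homology compute $H$ --- which is exactly what \cite{M-I}*{Lemma \ref{fish:duality}}, via Corollary \ref{discrete duality}, supplies. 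By contrast, the first and third steps are the classical uniqueness machinery together with a diagram chase.
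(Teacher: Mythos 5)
Your overall strategy---resolve the Polish pair by polyhedra, use controlled additivity and map excision to manufacture a combinatorial (co)chain complex that computes any theory satisfying the axioms, then compare the two theories formally---is the same as the paper's, but the step you yourself call the heart of the matter is exactly where your sketch has a genuine gap. The Controlled Additivity Axiom applies only to a configuration consisting of a Polish space $M$, a closed subset $X$, and a decomposition of $M\setminus X$ into clopen pieces scattered towards $X$; the ``differences between consecutive nerves'' of a cover tower do not supply such a configuration, and no amount of ``alternating applications'' is specified that would create one. What the paper does instead is form the infinite mapping telescope $P_{[0,\infty)}$ of the polyhedral resolution and attach $X$ at infinity, obtaining a single closed Polish pair $(P_{[0,\infty]},X)$ in which the complement is cellulated by a complex $L$ whose cells are scattered towards $X$. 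Controlled additivity is then applied not to nerve differences but to the auxiliary closed sets $Q^n$ (one small ball in each $n$-cell, together with $X$) and $R^n$, with map excision applied to the stretching map $r_n$, to identify $H_i(L^{(n)}\cup X,\,L^{(n-1)}\cup X)$ with the chain group $C_n^X(L)$ for $i=n$ and with $0$ otherwise (Lemma \ref{boundary map}); the duality input you correctly flag enters through the $\kappa$/$\bar\nu$ filtrations of subcomplexes and Corollary \ref{simplicial duality}, which make the two descriptions of $C_*^X(L)$ agree. Without the telescope (or an equivalent single ambient space containing $X$ as a closed subset with a scattered cellulation of the complement), your ``peeling'' has nowhere to invoke the axiom.

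Second, your assertion that the reduction terminates after \emph{finitely many} alternating uses of the two axioms cannot be repaired: after the skeletal computation one must still pass from the skeleta $L^{(i)}\cup X$ to all of $M=P_{[0,\infty]}$, i.e.\ over infinitely many stages, and in cohomology this a priori introduces a $\derlim$ correction. The paper disposes of this by a second telescope argument in the manner of Milnor (Lemma \ref{skeletal phantoms}), using $\bigsqcup$-additivity and map excision, and the correction term vanishes because the skeletal (co)homology groups stabilize in each degree (Lemma \ref{chains}), \emph{not} because of the internal $\lim\,\colim=\colim\,\lim$ symmetry, which only serves to make $C_*^X(L)$ well defined in two equivalent ways. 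Finally, two smaller points: your first step (classical uniqueness on a polyhedral subcategory) is not needed as a separate ingredient, since once $H_*(M,X)\simeq H_*\big(C_*^X(L)\big)$ is proved for every theory satisfying the axioms (Theorem \ref{computation}) the comparison is immediate; and the reduction from $H_n(X)$ to the relative group of the telescope pair, via the triple $(P_{[0,\infty]},\,X\sqcup P_0,\,P_0)$ and the deformation retraction of the extended telescope onto $P_0$ (an input from Part I), is absent from your outline but is what ties the combinatorial model back to the space $X$ and is also where naturality in $X$ is checked, by extending maps of spaces to cellular maps of telescopes.
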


\subsection{Sketch of proof} \label{uniqueness-section}

The strategy of proof of Theorem \ref{uniqueness} is to generalize Milnor's proof for $\C_\kappa$ \cite{Mi1},
except for one step (the vanishing of infinite-dimensional phantoms), which does not seem to generalize 
(because of the lack of a proof of \cite{M-IV}*{Theorem \ref{pos:homology spectral seq}} directly from the axioms), 
but luckily can be circumvented by drawing from Milnor's proof for $\C_\nu$ \cite{Mi2}.

Let $X$ be a Polish space.
By \cite{M-I}*{Theorem \ref{fish:isbell}}, $X$ is the limit of an inverse sequence of polyhedra and PL maps
$\dots\xr{p_1}P_1\xr{p_0}P_0$.
(They can be assumed to be locally compact and finite dimensional, but we will not need this.)
Moreover, we may assume that the $P_i$ come endowed either with triangulations $L_i$ so that each $p_i$ 
is simplicial as a map from $L_{i+1}$ to a certain subdivision of $L_i$ (see 
\cite{M-I}*{Remark \ref{fish:nerves and cubes}(3,5)}) 
or with cubulations $L_i$ so that each $p_i$ is cubical 
(that is, its restriction to each cube is the projection onto a subcube) as a map from $L_{i+1}$ to 
the standard cubical subdivision $L_i^\#$ of $L_i$ (see \cite{M-I}*{Remark \ref{fish:nerves and cubes}(1)}).
Using the product cell structure of $L_i\x [i-1,\,i]$, we make the infinite mapping telescope
$P_{[0,\infty)}$ into a cell complex $L$, that is, a CW complex whose attaching maps are PL embeddings.
(We could of course subdivide this cell complex into a simplicial complex, but we will not need this.)

We may assume for simplicity (without loss of generality) that $P_0=pt$, and hence 
$P_{[0,\infty]}=P_{[0,\infty)}\cup X$ is contractible (see Proposition \ref{fine telescope} concerning 
$P_{[0,\infty]}$).
Then the reduced (co)homology of $X$ is determined by that of the pair $(P_{[0,\infty]},\,X)$.
Our principal goal is to show, using only the axioms, that the latter can be computed from a combinatorial (co)chain 
complex determined by the cellulation of $P_{[0,\infty)}$ and hence does not depend on the choice of the theory.

It will be convenient to describe these chain and cochain complexes in a slightly more general setting.

\subsection{Filtrations of subcomplexes} \label{subcomplex-filtrations}

Suppose that $(M,X)$ is a closed pair of Polish spaces such that $M\but X$ is cellulated by a cell complex $L$ 
such that the cells of $L$ are scattered towards $X$.
Let 
\begin{alignat*}{1}
\kappa=\{&\text{ subcomplexes $C$ of $L$ such that the closure of $C$ in $M$ is compact }\},\\
\nu=\{&\text{ subcomplexes $N$ of $L$ such that $N\cup X$ is a neighborhood of $X$ in $M$ }\},\\
\bar\nu=\{&\text{ subcomplexes $Q$ of $L$ such that $Q$ is a closed subset of $M$ }\}.
\end{alignat*}

\begin{lemma} \label{simplicial cofinality} Let $S$ be a closed subset of $L$.

(a) If $S$ is closed in $M$, then it lies in a member of $\bar\nu$.

(a$'$) If $S$ meets every member of $\bar\nu$ in a compact set, then it meets every subset of $L$ 
that is closed in $M$ in a compact set.

(b$'$) If $S$ meets every member of $\kappa$ in a compact set, then it meets every compact subset of $M$ 
in a compact set.
\end{lemma}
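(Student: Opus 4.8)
The plan is to reduce all three parts to one geometric fact about cellular hulls. Write $\Cl_M$ for closure in $M$, and for a subset $S\subseteq L$ let $\langle S\rangle$ be the union of the closed cells $\bar\sigma$ over all cells $\sigma$ of $L$ that meet $S$; since each $\bar\sigma$ is a finite subcomplex of $L$, the set $\langle S\rangle$ is a subcomplex of $L$, and evidently $S\subseteq\langle S\rangle$. First I would prove the key inclusion
\[\Cl_M\langle S\rangle\ \subseteq\ \langle S\rangle\cup\Cl_M S.\]
Let $q_i\in\langle S\rangle$ with $q_i\to z\in M$, and choose for each $i$ a cell $\sigma_i$ with $q_i\in\bar\sigma_i$ and a point $s_i\in\sigma_i\cap S$. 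If $z\in L$, then $z\in\langle S\rangle$, because $\langle S\rangle$, being a subcomplex, is closed in $L$ and $L$ carries the subspace topology from $M$. If $z\in X$, I use an approximation trick: since the open cell $\sigma_i$ is dense in $\bar\sigma_i\ni q_i$, pick $y_i\in\sigma_i$ with $d(y_i,q_i)<1/i$; then $y_i\to z\in X$, so, as the cells of $L$ are scattered towards $X$, we get $\operatorname{diam}\sigma_i\to0$, hence $\operatorname{diam}\bar\sigma_i\to0$, hence $d(q_i,s_i)\le\operatorname{diam}\bar\sigma_i\to0$, so $s_i\to z$ and $z\in\Cl_M S$. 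This establishes the inclusion.

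Part (a) is then immediate: if $S$ is closed in $M$, the inclusion gives $\Cl_M\langle S\rangle\subseteq\langle S\rangle\cup S=\langle S\rangle$, so $\langle S\rangle$ is closed in $M$, i.e.\ $\langle S\rangle\in\bar\nu$, and $S\subseteq\langle S\rangle$. For (a$'$), given $T\subseteq L$ closed in $M$, part (a) yields $Q\in\bar\nu$ with $T\subseteq Q$; then $S\cap T=(S\cap Q)\cap T$ is a closed subset of the compact set $S\cap Q$, hence compact.

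For (b$'$), let $K\subseteq M$ be compact; since $S\cap K\subseteq K$ and $K$ is compact metrizable, it suffices to show that $S\cap K$ is sequentially closed in $K$. Suppose $s_i\in S\cap K$ with $s_i\to z\in K$. If $z\in L$ then $z\in S$ (as $S$ is closed in $L$), so $z\in S\cap K$. If $z\in X$, I derive a contradiction with the hypothesis. After passing to a subsequence we may assume the $s_i$ are pairwise distinct (otherwise some value occurs infinitely often, forcing $z\in L$) and lie in pairwise distinct cells $\sigma_i$ (otherwise one cell contains infinitely many of the $s_i$ and hence, by scatteredness, has diameter $0$ --- impossible, since it would then contain two distinct points). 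By scatteredness $\operatorname{diam}\sigma_i\to0$, and since $s_i\in\bar\sigma_i$ with $s_i\to z$, a short sequential-compactness argument (splitting a sequence in $\bigcup_i\bar\sigma_i$ according to whether its cell indices stay bounded or tend to $\infty$) shows that $C\bydef\bigcup_i\bar\sigma_i$ has compact closure in $M$; thus $C\in\kappa$. But $\{s_i\mid i\in\N\}\subseteq S\cap C$ is infinite, discrete, and closed in $C$ (its closure in $M$ is $\{s_i\mid i\in\N\}\cup\{z\}$ and $z\notin C$), so $S\cap C$ is not compact, contradicting the hypothesis that $S$ meets every member of $\kappa$ in a compact set.

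I expect the main obstacle to be the key inclusion, specifically ensuring that the cells containing the approaching points actually shrink: the hypothesis ``scattered towards $X$'' controls only the \emph{open} cells, so one cannot apply it to $q_i\in\bar\sigma_i$ directly (a priori $q_i$ could lie in a tiny face of a large cell $\sigma_i$), and perturbing $q_i$ into the open cell is precisely the device that repairs this. The only other delicate point is the case $z\in L$ of the key inclusion (and the symmetric step in (b$'$)): it is purely topological and relies on the fact, implicit in the hypothesis that $M\but X$ is cellulated by $L$, that the CW topology of $L$ agrees with the subspace topology it inherits from $M$.
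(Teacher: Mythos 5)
Your proof is correct and follows essentially the same route as the paper: part (a) via the subcomplex of closed cells meeting $S$ together with scatteredness, (a$'$) as an immediate consequence, and (b$'$) by assembling the (closed) cells along a sequence converging into $X$ into a member of $\kappa$ whose intersection with $S$ fails to be compact. Your extra care in perturbing points of closed cells into the open cells before invoking scatteredness is a sound refinement of a step the paper treats more briskly, not a different argument.
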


Part (a$'$) follows trivially from (a). 
Part (b$'$) would similarly follow from its ``dual'':

{\it Assertion (b).} ``Every compact subset of $M$ lies in $K\cup Q$ for some $K\in\kappa$ and 
some compact $Q\subset X$.''

However, Assertion (b) is false in general, since $L$ need not be locally finite.
But it is true ``up to homotopy'' (see Proposition \ref{homotopy inclusion}).

\begin{proof}[Proof. (a)] 
Let $N$ be the union of all closed cells of $L$ that meet $S$.
Suppose that a sequence of points $x_i\in N$ converges to an $x\in X$.
Then each $x_i$ lies in the same closed cell of $L$ with some $y_i\notin S$.
But then the $y_i$ also converge to $x$.
This contradicts our hypothesis that $S$ is closed in $M$ and lies in $L$.
Hence $N$ is closed in $M$.
\end{proof}

\begin{proof}[(b$'$)] Suppose that $K$ is a compact subset of $M$ such that $K\cap S$ is non-compact.
Then there exists a sequence of points $x_i\in K\cap S$ that converges to a point $x\in K\cap X$.
Let $\sigma_i$ be the minimal closed cell of $L$ containing $x_i$, and let $C=\bigcup_{i\in\N}\sigma_i$.
Then the closure of $C$ in $M$ is $C\cup\{x\}$ and it is compact. 
Thus the intersection of $S$ with the member $C$ of $\kappa$ is non-compact, since it contains 
the $x_i$ but does not contain their limit $x$.
\end{proof}

\begin{corollary} \label{simplicial duality}
Let $S$ be a subcomplex of $L$.

(a) $S$ lies in a member of $\bar\nu$ if and only if every member of $\kappa$ meets $S$ in a finite subcomplex
of $L$.

(b) $S$ lies in a member of $\kappa$ if and only if every member of $\bar\nu$ meets $S$ in a finite subcomplex
of $L$.
\end{corollary}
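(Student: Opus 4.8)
The plan is to deduce all four implications from Lemma~\ref{simplicial cofinality} together with two elementary properties of the CW complex $L$: every closed cell of $L$ is compact (being the image of a compact ball under a characteristic map), hence a closed subset of the Hausdorff space $M$; and every compact subset of $L$ meets only finitely many cells of $L$, so that a subcomplex of $L$ is a finite subcomplex if and only if it is compact. Combining these: if $D$ is a subcomplex of $L$ whose closure in $M$ is compact and contained in $M\but X$, then, $D$ being closed in $L=M\but X$, this closure equals $D$; thus $D$ is compact, hence a finite subcomplex.

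The two ``only if'' halves are symmetric, so consider the one in~(a). Suppose $S$ lies in some $Q\in\bar\nu$, and let $C\in\kappa$. Then $C\cap S$ is a subcomplex, and its closure in $M$ is contained both in the (compact) closure of $C$ in $M$ and in $Q$ itself (since $Q$ is closed in $M$); being a closed subset of a compact set it is compact, and being contained in $Q\subset M\but X$ it meets the hypothesis of the preliminary remark. Hence $C\cap S$ is a finite subcomplex. Interchanging the roles of $\kappa$ and $\bar\nu$ gives the ``only if'' half of~(b).

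For the ``if'' half of~(a): assume every member of $\kappa$ meets $S$ in a finite, hence compact, subcomplex. By Lemma~\ref{simplicial cofinality}(b$'$), $S$ meets every compact subset of $M$ in a compact set. If $S$ were not closed in $M$ there would be a sequence of distinct points $x_i\in S$ converging to some $x\notin S$; as $S$ is a subcomplex, hence closed in $L=M\but X$, we would have $x\in X$, and then $K=\{x_i\mid i\in\N\}\cup\{x\}$ would be a compact subset of $M$ with $S\cap K=\{x_i\mid i\in\N\}$ non-compact, a contradiction. So $S$ is closed in $M$, and the subcomplex $S$ is itself a member of $\bar\nu$.

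For the ``if'' half of~(b), which is the case requiring the most care: assume every member of $\bar\nu$ meets $S$ in a finite, hence compact, subcomplex. By Lemma~\ref{simplicial cofinality}(a$'$), $S$ meets every subset of $L$ that is closed in $M$ in a compact set; it suffices to show that the closure $\bar S$ of $S$ in $M$ is compact, for then $S$ is itself a member of $\kappa$. If $\bar S$ were not compact, then, $M$ being Polish and $\bar S$ a closed --- hence completely metrizable --- subspace, some sequence in $\bar S$ would have no subsequence convergent in $M$; approximating its terms by points of $S$ and discarding repetitions, one obtains an infinite set $T=\{p_i\mid i\in\N\}$ of distinct points $p_i\in S$ with no limit point in $M$. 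Then $T$ is a closed discrete subset of $M$ contained in $L$, so $T=S\cap T$ is compact by the previous sentence --- contradicting that $T$ is infinite and discrete. Hence $\bar S$ is compact and $S$ lies in a member of $\kappa$. The one genuinely delicate point is this passage from non-compactness of $\bar S$ to an infinite closed discrete subset lying \emph{inside} $S$ rather than merely inside $\bar S$ (part of which may lie in $X$); this is where completeness of $M$ is used. (Conceptually, Corollary~\ref{simplicial duality} is the subcomplex counterpart of Corollary~\ref{discrete duality} and expresses that $\kappa$ and $\bar\nu$ form a dual pair of families of subcomplexes of $L$, in the spirit of the duality lemma \cite{M-I}*{Proposition~\ref{fish:duality}}.)
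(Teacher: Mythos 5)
Your argument is correct in substance, but it takes a more self-contained route than the paper. The paper's proof is a two-line reduction: the ``only if'' halves are quoted from the duality lemma of Part I (\cite{M-I}*{Proposition \ref{fish:duality}}), and the ``if'' halves follow from that lemma combined with Lemma~\ref{simplicial cofinality}(a$'$),(b$'$). You invoke Lemma~\ref{simplicial cofinality}(a$'$),(b$'$) in exactly the same places, but you replace every appeal to Part I by direct point-set arguments: compactness of $\Cl(C\cap S)$ inside $\Cl(C)\cap Q$ for the ``only if'' halves, a sequence argument showing $S$ is closed in $M$ for the ``if'' half of (a), and the extraction of an infinite closed discrete subset of $S$ from non-compactness of $\Cl(S)$ for the ``if'' half of (b). In effect you reprove the relevant special case of the Part I duality; this makes the corollary independent of \cite{M-I} at the cost of a page of elementary topology, whereas the paper's citation keeps the conceptual link with the dual filtrations of Part I (which it also uses for Corollary~\ref{discrete duality}).

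Two caveats. First, you justify ``compact subcomplex $\Rightarrow$ finite subcomplex'' by the claim that every compact subset of $L$ meets only finitely many cells. That is the weak-topology CW fact, and it is not available here as stated: $L$ carries the metrizable topology of $M\but X$ and need not be locally finite (see the remark preceding Proposition~\ref{homotopy inclusion}), so a compact set --- even a convergent sequence --- can meet infinitely many cells. What you actually need, namely that a subcomplex of $L$ is finite iff it is compact (and that subcomplexes are closed in $L$), is true for the intended cellulations and is implicitly used by the paper's proof as well, but it must be extracted from the cell structure itself (e.g.\ from the fact that cells do not metrically degenerate except towards $X$, so that in a standard polyhedral model distinct vertices are uniformly separated); with an arbitrary metrizable cellulation a compact infinite subcomplex is possible and would falsify the corollary itself. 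Second, completeness of $M$ plays no role in your step (b): in any metric space a closed non-compact set fails to be sequentially compact, so metrizability alone yields the sequence with no convergent subsequence; the Polish hypothesis is not what makes that extraction work.
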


\begin{proof} The ``only if'' assertions follow from those of \cite{M-I}*{Proposition \ref{fish:duality}}.
The ``if'' assertions follow from those of \cite{M-I}*{Proposition \ref{fish:duality}} and from parts 
(a$'$), (b$'$) of Lemma \ref{simplicial cofinality}.
\end{proof}

The following ``correction'' of Assertion (b) above is not needed for the proof of 
the Uniqueness Theorem (Theorem \ref{uniqueness}).

\begin{proposition} \label{homotopy inclusion}
Suppose that $L$ is a simplicial complex.
Then every compact subset of $M$ lies in $U\cup Q$, where $Q$ is a compact subset of $X$ and $U$ is an open subset 
of $L$ such that $U\cup Q$ deformation retracts onto $C\cup Q$ keeping $Q$ fixed, where $C$ is 
a member of $\kappa$.
\end{proposition}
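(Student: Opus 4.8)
The plan is to mimic, in the combinatorial setting, the proof that a compact subset of a locally finite polyhedron lies in a finite subcomplex, correcting for the possible failure of local finiteness by subdividing $L$ more and more finely wherever $K$ spreads out over infinitely many simplices. Write $Q=K\cap X$, an automatically compact subset of $X$, so that $K\but Q=K\cap|L|$ lies in the polyhedron $|L|=M\but X$. The goal is to produce a subdivision $L''$ of $L$ (still with its cells scattered towards $X$, since refining only shrinks cells) and a \emph{full} subcomplex $C$ of $L''$ whose closure $\Cl_M C$ in $M$ is compact and contains $K\but Q$; then $U$ is taken to be the union of the open simplices of $L''$ whose closures meet $\Cl_M C$, so that $C\i U$, hence $K\i U\cup Q$, and what remains is a standard deformation.

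First I would identify the set towards which the refinement must be concentrated. Let $D$ be the set of all $y\in M$ every neighbourhood of which meets infinitely many pairwise distinct closed simplices of $L$ that meet $K$, and put $B=\Cl_M D$; since every point of $D$ is a limit of points of $K$, we get $B\i K$, so $B$ is compact. The basic combinatorial observation is that \emph{outside any neighbourhood of $B$ only finitely many closed simplices of $L$ meet $K$}: otherwise one could choose points of $K$ in infinitely many pairwise distinct such simplices and, by compactness of $K$, extract a convergent subsequence whose limit would, by the definition of $D$, lie in $B$, contradicting that the chosen simplices were bounded away from $B$. Using standard subdivision theory I would then pass to a subdivision $L'$ of $L$ in which the simplices meeting $K$ become arbitrarily small near $B$ (precisely: their diameters tend to $0$ whenever their distances to $B$ do); here the scatteredness hypothesis is what makes such a ``variable mesh'' compatible with the behaviour of $L$ near $X$, because the cells of $L$ near $X$ — and hence their subdivisions — are already small. (When $B\i X$, which is the case e.g.\ if $L$ happens to be locally finite away from $X$, no refinement is needed at this stage.)

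Next I would let $C_0$ be the subcomplex of $L'$ generated by the carriers in $L'$ of the points of $K\but Q$, so that $K\but Q\i C_0$, and check that $\Cl_M C_0$ is compact. Given a sequence $z_i$ in $C_0$, each $z_i$ lies in a closed simplex $\sigma_i$ of $L'$ which is the carrier of some $y_i\in K\but Q$, and $\sigma_i$ meets $K$; if a subsequence of the $\sigma_i$ approaches $B$ then their diameters tend to $0$ by the choice of $L'$, so the $z_i$ come within a vanishing distance of the compact set $B$ and subconverge, while otherwise a subsequence of the $\sigma_i$ stays bounded away from $B$ and therefore, by the observation above, lies in a finite union of closed simplices of $L$, which is compact, so again $z_i$ subconverges. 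Thus $\Cl_M C_0$ is sequentially compact, hence compact, and its trace on $X$ is a compact subset of $Q$. Finally I would subdivide $L'$ once more — barycentrically, and finely near $\Cl_M C_0$ — to obtain $L''$; this leaves the underlying point set of $C_0$ unchanged, so the corresponding subcomplex $C$ of $L''$ has $\Cl_M C$ compact, is full in $L''$, and (because $L''$ is fine near $\Cl_M C$ and near $X$) has the property that the set $U$ above is open in the subspace topology of $M\but X$. The open star of a full subcomplex carries the usual linear deformation retraction onto that subcomplex keeping it fixed; extending it by the identity on $Q$ yields the required deformation retraction of $U\cup Q$ onto $C\cup Q$ fixing $Q$, the only point needing attention being continuity at points of $Q\i X$, which holds because the simplices of $L''$ accumulating at $X$ have vanishing diameter (scatteredness), so the retraction displaces points near $X$ by arbitrarily little.

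I expect the main obstacle to be the middle two steps: isolating the accumulation set $B$, building a subdivision with the prescribed variable mesh, and then carefully verifying that $\Cl_M C$ is compact and that $U$ and the extended retraction behave correctly \emph{in the subspace topology of $M$} rather than merely in the weak topology of $L''$. Once the right subdivision is in hand, the deformation retraction of an open star onto a full subcomplex and its extension over $Q$ are routine.
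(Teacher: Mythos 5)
Your construction has a genuine gap at its first step, and this gap propagates. The claim that $B\subset K$ (hence that $B$ is compact) is false: a point $y$ can lie in $D$ because every neighbourhood of $y$ meets infinitely many closed simplices that meet $K$ \emph{elsewhere}, without $y$ being a limit of points of $K$. For instance, take infinitely many $2$-simplices sharing a common edge $e$ and let $K$ consist of one interior point of each page together with their limit, an interior point of $e$: then every point of $e$ belongs to $D$, while $K\cap e$ is a single point. Worse, $B$ need not be compact at all: one can have infinitely many large simplices through a common vertex $b$ near which the points of $K$ cluster, with each simplex also containing one vertex from a closed discrete unbounded set $\{a_j\}$; then every $a_j$ lies in $D$. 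Since you use compactness of $B$ essentially --- both in formulating the variable-mesh condition for the subdivision $L'$ and in the dichotomy proving that $\Cl_M C_0$ is compact (``the $z_i$ come within a vanishing distance of the compact set $B$'') --- this is not a cosmetic slip; the argument as written breaks down exactly in the non-locally-finite situations the proposition is designed to handle. (What you actually need to control is where the \emph{carriers of points of $K$} can be large, i.e.\ behaviour near $K\cap|L|$, not where the simplices meeting $K$ accumulate.)

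There is a second, structural problem: even if the subdivision scheme were carried out correctly, your $C$ is a subcomplex of a subdivision $L''$ of $L$ (and $L''$ depends on $K$), whereas the proposition asserts that $C$ is a member of $\kappa$, which by \S\ref{subcomplex-filtrations} is a filtration by subcomplexes of the fixed complex $L$. You cannot repair this by passing to the $L$-subcomplex generated by your $C$: in the ``hedgehog'' of infinitely many unit edges through one vertex, with $K$ a sequence converging to the centre along distinct spines, the small subdivided carriers have compact closure but the $L$-subcomplex they generate is the union of all spines, whose closure is not compact --- this is precisely why Assertion (b) fails and why the proposition is stated via a deformation retraction. The paper's own proof avoids subdivision entirely: exhaust the locally compact set $K\setminus X$ by compacta $K_1\subset K_2\subset\dots$, cover each ``annulus'' $K_n\setminus\Int_K K_{n-1}$ by finitely many open vertex stars of $L$ meeting it, let $V$ be the union of these finite vertex sets, take $C$ to be the full subcomplex of $L$ spanned by $V$ and $U$ the union of the chosen open stars; the standard linear retraction of $U$ onto the full subcomplex $C$ gives the deformation retraction, and compactness of $\Cl C$ follows from scatteredness towards $X$, because a sequence in $C$ escaping every finite stage must lie in simplices with vertices in $V_{n_i}$ for $n_i\to\infty$, whose stars meet $K$ outside $K_{n_i-1}$, forcing convergence into $Q$. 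You should either adopt that route or rework your argument so that no subdivision is needed and the mesh control is anchored to $K$ rather than to $B$.
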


\begin{proof} Let $K$ be the given compact subset of $M$, and let $Q=K\cap X$.
Since $K\but X$ is locally compact, it can be represented as a union of compacta $K_1\subset K_2\subset\dots$ 
such that each $K_i\subset\Int_K K_{i+1}$.
Let $K_0=\emptyset$.
Let $(U_i)_{i\in L_0}$ be the cover of $L$ by the open stars of its vertices.
For each $n=1,2,\dots$ there exists a finite $V_n\subset L_0$ such that $\bigcup_{i\in V_n} U_i$ contains 
$K_n\but\Int_K K_{n-1}$.
We may assume that each $U_i$ with $i\in V_n$ actually meets $K_n\but\Int_K K_{n-1}$.
Let $V=\bigcup_{n\in\N}V_n$.
Let $C$ be the subcomplex of $L$ consisting of all simplexes with vertices in $V$.
Then $U:=\bigcup_{i\in V} U_i$ deformation retracts onto $C$.
Clearly, this deformation retraction extends to a deformation retraction of $U\cup Q$ onto $C\cup Q$ 
keeping $Q$ fixed.

It remains to prove that the closure of $C$ is compact.
Let $W_n=V_1\cup\dots\cup V_n$ and let $C_n$ be the subcomplex of $L$ consisting of all simplexes with 
vertices in $W_n$.
Then each $C_i\subset C_{i+1}$.
Each simplex of $C$ has a finite number of vertices, so it must be in some $C_n$.
Thus $C=\bigcup_{n\in\N} C_n$.

Let $(x_i)$ be a sequence of points in $C$.
If all of them are in some $C_n$, then there is a convergent subsequence.
Else there is a subsequence of points $x_{k_i}\in C\but C_i$.
Then each $x_{k_i}$ lies in a simplex of $C$ which has a vertex $v_i\in V\but W_i$.
Then $v_i\in V_{n_i}$ for some $n_i>i$ and hence the open star $U_{v_i}$ of $v_i$
contains a point $y_i\in K_{n_i}\but\Int_K K_{n_i-1}$.
In particular, $y_i\in K\but\Int_K K_i$.
Then $(y_i)$ converges to some $x\in Q$.
Since the simplexes of $L$ are scattered towards $X$, $(v_i)$ also converges to $x$, and consequently also 
$(x_{k_i})$ converges to $x$.
Thus $C$ is pre-compact.
\end{proof}

\subsection{Chain complexes} \label{chain complexes}

Let $P$ be any subcomplex of $L$, and let $P_n=P^{(n)}\but P^{(n-1)}$ be the set of its $n$-cells.
If $\sigma\in P_n$, let $G_\sigma$ be a copy of the coefficients $G$ of the ordinary theory, 
i.e.\ $G_\sigma=H_n(\sigma,\partial\sigma)$ (resp.\ $H^n(\sigma,\partial\sigma)$).
For an element $g=(g_\sigma)\in\prod_{\sigma\in P_n}G_\sigma$ let $\supp(g)=\{\sigma\in P_n\mid g_\sigma\ne 0\}$.
Let us define four subgroups of $\prod_{\sigma\in P_n}G_\sigma$, each containing $\bigoplus_{\sigma\in P_n}G_\sigma$:
\begin{alignat*}{1}
C_n^\kappa(P)=\Big\{g\in\prod_{\sigma\in P_n}G_\sigma\ \Big|\ &
\supp(g)\subset K\text{ for some }K\in\kappa\Big\}\\
C^n_\kappa(P)=\Big\{g\in\prod_{\sigma\in P_n}G_\sigma\ \Big|\ &
\supp(g)\cap K\text{ is finite for each }K\in\kappa\Big\}\\
C_n^\nu(P)=\Big\{g\in\prod_{\sigma\in P_n}G_\sigma\ \Big|\ &
\supp(g)\cap F\text{ is finite for each }F\in\bar\nu\Big\}\\
C^n_\nu(P)=\Big\{g\in\prod_{\sigma\in P_n}G_\sigma\ \Big|\ &
\supp(g)\subset F\text{ for some }F\in\bar\nu\Big\}
\end{alignat*}
Corollary \ref{simplicial duality} guarantees that 
\[C_n^\kappa(P)=C_n^\nu(P)\qquad\text{and}\qquad C^n_\kappa(P)=C^n_\nu(P).\]
Let us note that
\begin{alignat*}{3}
C_n^\kappa(P)&\ \simeq\ \colim_{K\in\kappa}&\prod_{\sigma\in P_n\cap K}G_\sigma&\ \simeq\ 
\colim_{K\in\kappa}\ \lim_{N\in\nu}\prod_{\sigma\in (P_n\cap K)\but N}G_\sigma\\
C^n_\kappa(P)&\ \simeq\ \lim_{K\in\kappa}&\bigoplus_{\sigma\in P_n\cap K}G_\sigma&\ \simeq\ 
\lim_{K\in\kappa}\ \colim_{N\in\nu}\prod_{\sigma\in (P_n\cap K)\but N}G_\sigma\\
C_n^\nu(P)&\ \simeq\ \lim_{N\in\nu}&\bigoplus_{\sigma\in P_n\but N}G_\sigma&\ \simeq\ 
\lim_{N\in\nu}\ \colim_{K\in\kappa}\prod_{\sigma\in (P_n\cap K)\but N}G_\sigma\\
C^n_\nu(P)&\ \simeq\ \colim_{N\in\nu}&\prod_{\sigma\in P_n\but N}G_\sigma&\ \simeq\ 
\colim_{N\in\nu}\ \lim_{K\in\kappa}\prod_{\sigma\in (P_n\cap K)\but N}G_\sigma
\end{alignat*}
Of course, the finite product $G_{KN}:=\prod_{\sigma\in (P_n\cap K)\but N}G_\sigma$ is isomorphic to the usual 
relative cellular chain and cochain groups $C_n(P\cap K,\,P\cap K\cap N)$ and $C^n(P\cap K,\,P\cap K\cap N)$ with 
coefficients in $G$.
It follows from the universal properties of (co)limit that the boundary and coboundary maps of
the corresponding chain and cochain complexes admit a unique natural extension to the (co)limit chain and 
cochain groups.
This can be used to form (co)chain complexes out of the new (co)chain groups: 
\begin{alignat*}{4}
C_*^\kappa(P)&\ \simeq\ \colim_{K\in\kappa}&C_*^\infty(P\cap K)&\ \simeq\ 
\colim_{K\in\kappa}\ \lim_{N\in\nu}\,C_*(P\cap K,\,P\cap K\cap N)\\
C^*_\kappa(P)&\ \simeq\ \lim_{K\in\kappa}&C^*_c(P\cap K)&\ \simeq\ 
\lim_{K\in\kappa}\ \colim_{N\in\nu}\,C^*(P\cap K,\,P\cap K\cap N)\\
C_*^\nu(P)&\ \simeq\ \lim_{N\in\nu}&C_*(P,\,P\cap N)&\ \simeq\ 
\lim_{N\in\nu}\ \colim_{K\in\kappa}\,C_*(P\cap K,\,P\cap K\cap N)\\
C^*_\nu(P)&\ \simeq\ \colim_{N\in\nu}&\ C^*(P,\,P\cap N)&\ \simeq\ 
\colim_{N\in\nu}\ \lim_{K\in\kappa}\,C^*(P\cap K,\,P\cap K\cap N)
\end{alignat*}
Here $C^*_c(P\cap K)\simeq C^*_c(P\cap K;\,\Z)\otimes G$ is the usual complex of cellular cochains with 
compact support, and $C_*^\infty(P\cap K)\simeq\Hom\big(C^*_c(P\cap K;\,\Z);\,G\big)$ 
is the usual complex of possibly infinite cellular chains.

The natural maps \[\colim_{K\in\kappa}\ \lim_{N\in\nu}\,G_{KN}\xr{\phi}\lim_{N\in\nu}\ \colim_{K\in\kappa}\,G_{KN},\]
\[\colim_{N\in\nu}\ \lim_{K\in\kappa}\,G_{KN}\xr{\psi}\lim_{K\in\kappa}\ \colim_{N\in\nu}\,G_{KN}\]
are easily seen to commute with the thus extended boundary and coboundary maps. 
Since $\phi$ and $\psi$ are actually isomorphisms, we conclude that
\[C_*^\kappa(P)\simeq C_*^\nu(P)\qquad\text{and}\qquad C^*_\kappa(P)\simeq C^*_\nu(P)\]
also as (co)chain complexes.
So we will also denote these (co)chain complexes by $C_*^X(P)$ and $C^*_X(P)$ when their specific construction
does not concern us.
We also have the relative (co)chain complexes $C_*^X(P,Q):=C_*^X(P)/C_*^X(Q)$ and $C^*_X(P,Q):=C^*_X(P)/C^*_X(Q)$.

\begin{remark} It is easy to see that $\lim\Hom(G_\alpha,G)\simeq\Hom(\colim G_\alpha,\,G)$ for a direct system 
of abelian groups $G_\alpha$.
For $G=\Z$, we also have $\Hom\big(\prod_{\N}\Z,\,\Z\big)\simeq\bigoplus_{\N}\Z$, the isomorphism being
provided by the inclusion $\bigoplus_{\N}\Z\to\Hom\Big(\Hom\big(\bigoplus_{\N}\Z,\,\Z\big),\,\Z\Big)$
(see \cite{Fu}*{Corollary 94.6}).
Hence $C^n_\kappa\simeq\Hom(C_n^\kappa,\Z)$ and $C_n^\nu\simeq\Hom(C^n_\nu,\Z)$, and it follows that
$C^*_X(P)\simeq\Hom\big(C_*^X(P),\,\Z\big)$ and $C_*^X(P)\simeq\Hom\big(C^*_X(P),\,\Z\big)$ (only for chain
and cochain complexes with integer coefficients).
\end{remark}

\subsection{Computation of axiomatic (co)homology}

\begin{theorem} \label{computation}
If $H$ is an ordinary (co)homology theory on $\C_{\nu\kappa}$ satisfying map excision and 
controlled additivity, then
\[H_n(M,X)\simeq H_n\big(C_*^X(L)\big),\]
respectively,
\[H^n(M,X)\simeq H^n\big(C^*_X(L)\big).\]

Moreover, if $M'$ is a closed subset of $M$ such that $L':=M\cap L$ is a subcomplex of $L$, then
\[H_n(M,\,X\cup M')\simeq H_n\big(C_*^X(L,L')\big),\]
respectively,
\[H^n(M,\,X\cup M')\simeq H^n\big(C^*_X(L,L')\big).\]
\end{theorem}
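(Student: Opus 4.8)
The plan is to deduce the relative assertion from the absolute one, and to prove the absolute assertion $H_n(M,X)\simeq H_n\big(C_*^X(L)\big)$ (and its cohomological counterpart) by combining two arguments of Milnor: his uniqueness proof for countable CW pairs \cite{Mi2}, which takes care of the ``finite'' building blocks and, in particular, of the case $X=\emptyset$; and his telescope argument for compact pairs \cite{Mi1}, which governs the passage to the limit towards $X$. The two are made to fit together by the Controlled Additivity Axiom --- and, in cohomology, by its dual form, Proposition \ref{dualized}.

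\emph{Reduction of the relative assertion.} Assume the absolute assertion, naturally in inclusions of closed pairs. Given a closed $M'\subset M$ with $L':=M'\cap L$ a subcomplex of $L$: since $M'$ is closed in $M$, a subcomplex of $L'$ has compact closure in $M'$, resp.\ is closed in $M'$, if and only if the same holds in $M$; hence the filtrations $\kappa$ and $\bar\nu$ of $(M',\,M'\cap X)$ coincide with those of $(M,X)$ restricted to $L'$, so $C_*^X(L')=C_*^{M'\cap X}(L')$, and the absolute assertion for $(M',\,M'\cap X)$ gives $H_n(M',\,M'\cap X)\simeq H_n\big(C_*^X(L')\big)$. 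The inclusion $M'\emb X\cup M'$ is a closed map restricting to a homeomorphism of $M'\but X$ onto $(X\cup M')\but X$, so by map excision $H_n(X\cup M',\,X)\simeq H_n(M',\,M'\cap X)\simeq H_n\big(C_*^X(L')\big)$. Comparing the homology exact sequence of the triple $(M,\,X\cup M',\,X)$ with the algebraic exact sequence of $0\to C_*^X(L')\to C_*^X(L)\to C_*^X(L,L')\to 0$ --- the isomorphisms just obtained, together with the absolute assertion for $(M,X)$, make the resulting ladder commute --- the five lemma yields $H_n(M,\,X\cup M')\simeq H_n\big(C_*^X(L,L')\big)$. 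The cohomological case is identical.

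\emph{The case $X=\emptyset$.} Here $M=|L|$ is a Polish cell complex, $\kappa$ consists of the finite subcomplexes and $\nu=\bar\nu$ of all subcomplexes, so $C_*^X(L)$ and $C^*_X(L)$ are the ordinary cellular chain and cochain complexes of $L$. That an ordinary (co)homology theory which is $\bigsqcup$-additive --- a special case of controlled additivity, as noted in \S\ref{controlled additivity} --- computes the (co)homology of a countable CW pair from its cellular (co)chain complex is the countable-complex case of Milnor's uniqueness theorem \cite{Mi2}; the same reference gives the relative version over $X=\emptyset$. Thus, for all subcomplexes $D'\subset D$ of $L$ equipped with the induced filtrations (which are cofinal among the intrinsic ones), $H_n(D)\simeq H_n\big(C_*^X(D)\big)$ and $H_n(D,D')\simeq H_n\big(C_*^X(D,D')\big)$.

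\emph{The case of general $X$, and the main obstacle.} Fix $K\in\kappa$ and let $\bar C_K$ be the closure of $|K|$ in $M$; then $(\bar C_K,\,\bar C_K\cap X)$ is a compact pair, $\bar C_K\but X=|K|$ is cellulated by $K$, and $\bar C_K\cap X$ has a countable neighbourhood basis by sets $W_N:=|N|\cup(\bar C_K\cap X)$ with $N$ a subcomplex of $K$ (scatteredness, via the fact that it forces no closed cell of $L$ to meet $X$, is what makes these sets neighbourhoods and the basis countable). For each such $N$, map excision to the countable CW pair $(Q_N,\,Q_N\cap N)$ --- where $Q_N$ is the subcomplex of $L$ generated by the cells of $K$ outside $N$, which is closed in $M$ again by scatteredness --- followed by the $X=\emptyset$ case, identifies $H_n(\bar C_K,\,W_N)\simeq H_n\big(C_*(K,\,K\cap N)\big)$, naturally in $N$. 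Milnor's telescope argument for compact pairs --- using only the Eilenberg--Steenrod axioms, $\bigvee^+$-additivity (Milnor's Cluster Axiom, a special case of controlled additivity) and map excision --- then supplies a natural exact sequence
\[0\to\derlim_N H_{n+1}(\bar C_K,\,W_N)\to H_n(\bar C_K,\,\bar C_K\cap X)\to\invlim_N H_n(\bar C_K,\,W_N)\to 0;\]
the inverse system of chain groups $C_p(K,\,K\cap N)$ consists of epimorphisms, so $\derlim_N C_p(K,\,K\cap N)=0$, and the algebraic Milnor sequence presents $H_n\big(C_*^\infty(K)\big)=H_n\big(\invlim_N C_*(K,\,K\cap N)\big)$ as the same extension, whence $H_n(\bar C_K,\,\bar C_K\cap X)\simeq H_n\big(C_*^\infty(K)\big)$, naturally in $K$. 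Passing to the colimit over $\kappa$ --- exact, hence commuting with $H_*$ --- and using the Controlled Additivity hypothesis, which gives $H_n(M,X)\simeq\colim_{K\in\kappa}H_n(\bar C_K,\,\bar C_K\cap X)$ (the cell-complex incarnation of the isomorphism $\phi_*$ of \S\ref{controlled additivity}), identifies $H_n(M,X)$ with $H_n\big(\colim_{K\in\kappa}C_*^\infty(K)\big)=H_n\big(C_*^\kappa(L)\big)=H_n\big(C_*^X(L)\big)$. The cohomological statement is dual, with a colimit in place of the $\derlim$ sequence and using the dual formulation of \S\ref{controlled additivity} together with Proposition \ref{dualized}. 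The step I expect to be the main obstacle is the one flagged in \S\ref{uniqueness-section}: in Milnor's compact argument the surjectivity of $H_n(\bar C_K,\,\bar C_K\cap X)$ onto $\invlim_N H_n(\bar C_K,\,W_N)$ --- the ``vanishing of infinite-dimensional phantoms'' --- does not carry over verbatim, since the approximations $(Q_N,\,Q_N\cap N)$ need not be finite, and it has to be recovered by running the $\C_\nu$ computation on these countable approximations in place of a spectral-sequence argument.
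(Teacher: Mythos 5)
Your reduction of the relative case and your use of Milnor's $\C_\nu$-argument for the blocks are reasonable, but the pivotal step of your plan has a genuine gap: you invoke ``the Controlled Additivity hypothesis'' to get
\[H_n(M,X)\simeq\colim_{K\in\kappa}H_n(\bar C_K,\,\bar C_K\cap X),\]
calling it ``the cell-complex incarnation of $\phi_*$.'' But the axiom of \S\ref{controlled additivity} only concerns the situation where $M\but X$ is a \emph{disjoint union} of clopen pieces $C_i$ scattered towards $X$; in the setting of Theorem \ref{computation} the cells of $L$ are not disjoint (they share faces -- indeed $M\but X$ may well be connected, as for a mapping telescope), so the axiom says nothing about the family $\{\bar C_K\}_{K\in\kappa}$ of closures of subcomplexes with compact closure. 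What you are actually assuming is a ``compact supports relative to $X$'' property of the unknown theory $H$, which is precisely the kind of continuity/compact-support axiom (cf.\ Petkova's and Watanabe's characterizations in \S\ref{general-uniqueness}) that the present axiom system is designed to avoid, and I see no way to derive it from the stated axioms short of redoing the whole proof. The cohomological half is in even worse shape: dually you would need $H^n(M,X)\simeq\lim_{K\in\kappa}H^n(\bar C_K,\,\bar C_K\cap X)$ over the uncountable directed set $\kappa$, with the attendant $\derlim$ problems, none of which is provided by the dual formulation or Proposition \ref{dualized} (which again live only in the disjoint-union setting).

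The paper's proof uses controlled additivity where its hypotheses genuinely hold: for each $n$ one forms the closed sets $Q^n$ (one small ball $\frac12\sigma$ inside each $n$-cell, together with $X$) and $R^n$ (their boundary spheres together with $X$); these \emph{are} disjoint families scattered towards $X$, so the axiom computes their (co)homology, and the map excision axiom applied to $r_n\:(Q^n,R^n)\to(L^{(n)}\cup X,\,L^{(n-1)}\cup X)$ identifies $H_*(L^{(n)}\cup X,\,L^{(n-1)}\cup X)$ with the chain group $C_n^X(L)$ concentrated in degree $n$ (Lemma \ref{boundary map}). One then climbs the skeleta by the exact sequences of triples (Lemma \ref{chains}) and passes from the skeleta to $M$ by a \emph{countable} mapping-telescope argument needing only $\bigsqcup$-additivity and map excision (Lemma \ref{skeletal phantoms}), with the eventual stabilization of the skeletal maps killing the $\derlim$ term in cohomology. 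In other words, the uncountable filtrations $\kappa$, $\bar\nu$ enter only algebraically, inside the definition of $C_*^X(L)$, never as a space-level colimit of the values of $H$; your proposal, by contrast, needs that space-level colimit as an input, and that is where it breaks down. (A smaller, repairable point: your five-lemma reduction of the relative case needs an actual map of long exact sequences, i.e.\ naturality of the absolute isomorphism also with respect to the connecting homomorphisms, which you assert but do not construct; the paper instead runs the same skeletal argument directly for the pair.)
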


We will now prove Theorem \ref{computation}.
We explicitly discuss only homology except when the case of cohomology is not completely similar.
Also, we explicitly discuss only the case $M'=\emptyset$.

Let $Q^n$ be the closed subset of $M$ consisting of $X$ and of one $n$-ball $\frac12\sigma$ in the interior of each 
$n$-cell $\sigma\in L_n$.
Let $R^n$ be the closed subset of $M$ consisting of $X$ and of the boundaries $\partial(\frac12\sigma)$ for all
$\sigma\in L_n$.  
Let $r_n\:Q^n\to M$ fix $X$ and stretch each $\frac12\sigma$ homeomorphically onto $\sigma$. 
Clearly, $r_n$ is continuous and closed.
Then from the controlled additivity axiom applied to $Q$ and to $R$ and from the map excision axiom applied to
$r_n\:(Q^n,R^n)\to(L^{(n)}\cup X,\,L^{(n-1)}\cup X)$ we obtain the following assertion (a):

\begin{lemma} \label{boundary map}
(a) $H_n(L^{(n)}\cup X,\,L^{(n-1)}\cup X)\simeq C_n^X(L)$ and
$H_i(L^{(n)}\cup X,\,L^{(n-1)}\cup X)=0$ for $i\ne n$.

(b) Under the isomorphism of (a), the boundary map 
\[H_n(L^{(n)}\cup X,\,L^{(n-1)}\cup X)\to H_{n-1}(L^{(n-1)}\cup X,\,L^{(n-2)}\cup X)\] 
from the exact sequence of the triple $(L^{(n)}\cup X,\,L^{(n-1)}\cup X,\,L^{(n-2)}\cup X)$ corresponds to
the boundary map of the chain complex $C_*^X(L)$.
\end{lemma}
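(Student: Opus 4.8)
The plan is to follow the route indicated just before the statement: apply the controlled additivity axiom to the pairs $(Q^n,X)$ and $(R^n,X)$, extract $H_*(Q^n,R^n)$ from the exact sequence of the triple $(Q^n,R^n,X)$, and transport the outcome to $(L^{(n)}\cup X,\,L^{(n-1)}\cup X)$ along the isomorphism $r_{n*}$ furnished by the map excision axiom; part (b) is then obtained by reducing the comparison of the two boundary maps to a single cell. The first thing to check is that everything in sight is admissible. The subsets $L^{(n)}\cup X$ and $L^{(n-1)}\cup X$ are closed in $M$, hence Polish and in $\C_{\nu\kappa}$: a sequence lying in finitely many closed cells of $L$ has a convergent subsequence inside one of them, and a sequence meeting infinitely many distinct cells converges, if at all, only into $X$ because the cells of $L$ are scattered towards $X$ — the same mechanism as in Lemma \ref{simplicial cofinality}(a). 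The same reasoning shows that $Q^n$ and $R^n$ are closed in $M$, that $Q^n\but X=\bigsqcup_{\sigma\in L_n}\tfrac12\sigma$ and $R^n\but X=\bigsqcup_{\sigma\in L_n}\partial(\tfrac12\sigma)$ carry the disjoint-union topology with their summands scattered towards $X$ (each summand lies inside $\sigma$), and that $r_n$ is a closed map; since $r_n$ restricts to a continuous bijection of $Q^n\but R^n$ onto $(L^{(n)}\cup X)\but(L^{(n-1)}\cup X)=L^{(n)}\but L^{(n-1)}$, and a closed continuous map that is bijective off a pair of closed subsets is a homeomorphism there, the map excision axiom applies to $r_n$.

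Next, controlled additivity gives $H_i(Q^n,X)=0$ for $i\ne0$, while $H_0(Q^n,X)$ and, for each $i$, $H_i(R^n,X)$ are the subgroups of $\prod_{\sigma\in L_n}G$ and $\prod_{\sigma\in L_n}H_i(\partial\tfrac12\sigma)$ consisting of the elements whose support lies in a member of the compact filtration on the index set $L_n$. A short bookkeeping argument — using that $\tfrac12\sigma$ and $\partial(\tfrac12\sigma)$ both ``sit at the location of $\sigma$'', so that a subset $T\subseteq L_n$ is pre-compact in the respective quotient spaces if and only if the subcomplex of $L$ generated by $\{\sigma\mid\sigma\in T\}$ is pre-compact in $M$, which by Lemma \ref{simplicial cofinality} and Corollary \ref{simplicial duality} means $T\subseteq K$ for some $K\in\kappa$ — shows that in both cases the indexing ideal is exactly the one used to cut $C_n^X(L)$ out of $\prod_{\sigma\in L_n}G_\sigma$. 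Since forming such ``$\kappa$-supported'' subgroups is a filtered colimit of products, hence an exact functor of the coefficient system, the exact sequence of the triple $(Q^n,R^n,X)$, under these identifications and the naturality of the controlled additivity isomorphisms with respect to $(R^n,X)\emb(Q^n,X)$, becomes the $\kappa$-supported subgroup over $\sigma\in L_n$ of the exact sequences of the pairs $(\tfrac12\sigma,\,\partial\tfrac12\sigma)$. As $H_i(\tfrac12\sigma,\partial\tfrac12\sigma)\cong H_i(D^n,S^{n-1})$ equals $G$ for $i=n$ and vanishes otherwise — the degenerate pairs $(pt,\emptyset)$ and $([0,1],\{0,1\})$ included — we conclude that $H_i(Q^n,R^n)=0$ for $i\ne n$ and $H_n(Q^n,R^n)\cong C_n^X(L)$; applying $r_{n*}$ proves (a). (Equivalently, one may first establish a relative form of controlled additivity by a five-lemma argument, exactly as $\bigvee^+$-additivity yields its relative version, and apply it to $(Q^n,R^n)$ directly.)

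For (b), recall from \S\ref{chain complexes} that the differential of $C_*^X(L)$ is the unique map extending, compatibly with the defining (co)limits, the ordinary cellular boundary of each finite subcomplex $P\cap K$, $K\in\kappa$; and the connecting homomorphism $\partial$ of the triple is a natural transformation, hence likewise compatible with inclusions of finite subcomplexes and with $\colim_{K\in\kappa}$. Consequently both maps $C_n^X(L)\to C_{n-1}^X(L)$ are determined by their restrictions to the images of the finite chain groups, and in fact by their effect on a single summand $G_\sigma=H_n(\bar\sigma,\partial\sigma)$, where $\bar\sigma$ is a closed cell of $L$. There the claim is the classical computation that the composite of the connecting homomorphism $H_n(\bar\sigma,\partial\sigma)\to H_{n-1}(L^{(n-1)}\cup X,\,L^{(n-2)}\cup X)$ with the projection onto the $\tau$-summand $G_\tau$ is multiplication by the incidence number $[\sigma:\tau]$, the degree of the composite $\partial\sigma\to L^{(n-1)}\to L^{(n-1)}/(L^{(n-1)}\but\tau)\simeq S^{n-1}$, which is precisely the cellular differential; the case $n=0$ is vacuous.

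I expect the main obstacle to be the ``bookkeeping'' in the second paragraph: verifying that the various compact and neighborhood filtrations produced by applying controlled additivity to $Q^n$ and to $R^n$ all restrict to one and the same ideal on the set $L_n$ of $n$-cells, so that the resulting subgroups of $\prod_{\sigma\in L_n}G_\sigma$ are literally $C_n^X(L)$ and $C_{n-1}^X(L)$ — this is exactly where the standing hypotheses of \S\ref{subcomplex-filtrations}, Lemma \ref{simplicial cofinality}, and Corollary \ref{simplicial duality} must be used. The reduction to a single cell in (b) is a close second, as it hinges on the part-(a) isomorphism being natural with respect to the cellular retractions; the single-cell computations themselves are classical.
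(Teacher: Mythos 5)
Your proposal follows the paper's own route essentially verbatim: part (a) via controlled additivity applied to $(Q^n,X)$ and $(R^n,X)$, the exact sequence of the triple $(Q^n,R^n,X)$, and map excision along $r_n$; part (b) by reducing, through the universal properties of the defining limits and colimits, to finite subcomplexes where the classical Eilenberg--Steenrod comparison of the connecting homomorphism with the cellular differential applies. The extra verifications you supply (closedness and scatteredness of $Q^n$, $R^n$, matching of the ideals on $L_n$, exactness of the $\kappa$-direct-sum construction) are exactly the details the paper leaves implicit, so the proof is correct and not a different argument.
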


\begin{proof}[Proof of (b)] If $L$ is a finite complex, then $M=X\sqcup L$, so (b) reduces to its special case 
$X=\emptyset$.
Also, $C^X_*(L)$ is the usual cellular chain complex $C_*(L)$.
Then the assertion holds for any homology theory (see \cite{ES}).

In the general case, both boundary maps are determined by the corresponding boundary maps on finite subcomplexes
using the universal properties of limit and colimit.
\end{proof}

\begin{lemma} \label{chains} 
(a) $H_i(L^{(n)}\cup X,\,X)$ is zero for $i>n$ and isomorphic to $H_i\big(C_*^X(L)\big)$ for $i<n$.
Also $H_n(L^{(n)}\cup X,\,X)\simeq Z_n\big(C_*^X(L)\big)$.

(b) Under the latter isomorphism and the isomorphism of Lemma \ref{boundary map}(a), the maps
\[H_{n+1}(L^{(n+1)}\cup X,\,L^{(n)}\cup X)\xr{\partial_n} H_n(L^{(n)}\cup X,\,X)\xr{j_*} 
H_n(L^{(n)}\cup X,\,L^{(n-1)}\cup X)\]
correspond to the maps
\[C_{n+1}^X(L)\xr{\partial} Z_n\big(C_*^X(L)\big)\xr{i} C_n^X(L).\]
\end{lemma}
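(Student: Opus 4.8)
The plan is to run the usual Eilenberg--Steenrod computation of cellular homology, with the sets $Y_n:=L^{(n)}\cup X$ playing the role of skeleta and $C_*^X(L)$ playing the role of the cellular chain complex, feeding in Lemma \ref{boundary map} as the only geometric input; as elsewhere we discuss homology only. Before anything else I would record that each $Y_n$ is closed in $M$: indeed $Y_n=r_n(Q^n)$ is the image of the closed set $Q^n$ under the closed map $r_n$ from the proof of Lemma \ref{boundary map}. Hence $Y_n$ is a closed, and therefore Polish, subspace of $M$, the pairs $(Y_n,X)$, $(Y_n,Y_{n-1})$ and $(Y_{n-1},X)$ all lie in $\C_{\nu\kappa}$, and $H$ may be applied to the exact sequence of the triple $(Y_n,Y_{n-1},X)$ (see \cite{ES}).

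I would then fix notation at the chain level. Write $Z_k$, $B_k$ for the cycles and boundaries of $C_*^X(L)$; the $k$th boundary of $C_*^X(L)$ factors as $C_k^X(L)\xr{\partial}Z_{k-1}\xr{i}C_{k-1}^X(L)$, where $i$ is the inclusion and $\partial$ is well defined because $B_{k-1}\subset Z_{k-1}$, so that $\ker\partial=Z_k$ and $\coker\partial=Z_{k-1}/B_{k-1}=H_{k-1}\big(C_*^X(L)\big)$. From Lemma \ref{boundary map}(a) we have isomorphisms $\theta_k\:H_k(Y_k,Y_{k-1})\xr{\simeq}C_k^X(L)$ and the vanishing $H_i(Y_k,Y_{k-1})=0$ for $i\ne k$, and from Lemma \ref{boundary map}(b) that under the $\theta$'s the connecting map of the triple $(Y_k,Y_{k-1},Y_{k-2})$ becomes the $k$th boundary $i\circ\partial$. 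The crucial observation is that (a) must be proved in a \emph{decorated} form: I would prove by induction on $n$ the assertion $(\mathrm I_n)$ that $H_i(Y_n,X)=0$ for $i>n$; that $H_i(Y_n,X)\simeq H_i\big(C_*^X(L)\big)$ for $i<n$; and that there is an isomorphism $\zeta_n\:H_n(Y_n,X)\xr{\simeq}Z_n$ with $\theta_n\circ j_*=i\circ\zeta_n$, where $j_*\:H_n(Y_n,X)\to H_n(Y_n,Y_{n-1})$ is induced by the inclusion of pairs. For $n<0$ one has $Y_n=X$ and $(\mathrm I_n)$ holds trivially.

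The crux of the inductive step, granting $(\mathrm I_{n-1})$, is to identify the connecting map $\delta\:H_n(Y_n,Y_{n-1})\to H_{n-1}(Y_{n-1},X)$ of the triple $(Y_n,Y_{n-1},X)$ --- which is exactly the map $\partial_{n-1}$ figuring in part (b). Since $j_*\circ\delta$ equals the connecting map of the triple $(Y_n,Y_{n-1},Y_{n-2})$ (both are the Eilenberg--Steenrod boundary followed by a pair inclusion, and pair inclusions compose), Lemma \ref{boundary map}(b) gives $\theta_{n-1}\circ j_*\circ\delta=i\circ\partial\circ\theta_n$; comparing this with $\theta_{n-1}\circ j_*=i\circ\zeta_{n-1}$ from $(\mathrm I_{n-1})$ and cancelling the monomorphism $i$ yields $\zeta_{n-1}\circ\delta=\partial\circ\theta_n$. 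So, after the fixed identifications, $\delta$ is the map $\partial\:C_n^X(L)\to Z_{n-1}$; in particular $\ker\delta\simeq Z_n$ and $\coker\delta\simeq H_{n-1}\big(C_*^X(L)\big)$. Reading the exact sequence of $(Y_n,Y_{n-1},X)$ degree by degree then yields $(\mathrm I_n)$: for $i>n$ both neighbours of $H_i(Y_n,X)$ vanish, by $(\mathrm I_{n-1})$ and Lemma \ref{boundary map}(a); for $i<n-1$ the map $H_i(Y_{n-1},X)\to H_i(Y_n,X)$ is an isomorphism, so $H_i(Y_n,X)\simeq H_i\big(C_*^X(L)\big)$ by $(\mathrm I_{n-1})$; for $i=n-1$ that map is onto with kernel $\im\delta$, whence $H_{n-1}(Y_n,X)\simeq\coker\delta\simeq H_{n-1}\big(C_*^X(L)\big)$; and for $i=n$, using $H_n(Y_{n-1},X)=0$, the map $j_*$ is injective with image $\ker\delta$, which $\theta_n$ carries isomorphically onto $Z_n$ --- this defines $\zeta_n$ and forces $\theta_n\circ j_*=i\circ\zeta_n$. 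This closes the induction and proves (a).

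Part (b) then falls out of the same computation run one index up: in deducing $(\mathrm I_{n+1})$ from $(\mathrm I_n)$, the connecting map called $\delta$ there is precisely the map $\partial_n\:H_{n+1}(Y_{n+1},Y_n)\to H_n(Y_n,X)$ of part (b), and the computation above showed $\zeta_n\circ\partial_n=\partial\circ\theta_{n+1}$, so that $\partial_n$ corresponds to $\partial\:C_{n+1}^X(L)\to Z_n$; post-composing with $j_*$, which $(\mathrm I_n)$ identifies with $i\:Z_n\emb C_n^X(L)$, reproduces the $(n+1)$st boundary $i\circ\partial$, as required. I do not expect a deep obstacle here: Lemma \ref{boundary map} has already absorbed all uses of controlled additivity and map excision, so what remains is diagram-chasing inside a single theory $H$. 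What needs genuine care --- and what makes the naive ``(a) first, then (b)'' order fail --- is that one must establish not merely that abstract isomorphisms exist but that $j_*$ corresponds to the honest inclusion $Z_n\emb C_n^X(L)$; this is why the induction must carry the decorated statement $(\mathrm I_n)$ rather than (a) alone, and it is the only delicate bookkeeping in the argument.
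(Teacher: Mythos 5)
Your proposal is correct and follows essentially the same route as the paper: induction on $n$ via the exact sequence of the triple $(L^{(n)}\cup X,\,L^{(n-1)}\cup X,\,X)$, with Lemma \ref{boundary map} as the only input, identifying the connecting map with $\partial$ and $j_*$ with the inclusion $i$ by cancelling the monomorphism. Your ``decorated'' inductive statement $(\mathrm I_n)$ is just an explicit form of what the paper's induction carries by assuming parts (a) and (b) jointly at the previous stage, so the two arguments coincide apart from bookkeeping.
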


\begin{proof} The case $n=0$ follows from Lemma \ref{boundary map}, using that $i$ and $j_*$ are isomorphisms
for $n=0$.
Let us assume that the lemma is known for $n-1$ and prove it for the given $n$.
We consider the exact sequence of the triple $(L^{(n)}\cup X,\,L^{(n-1)}\cup X,\,X)$ and use Lemma 
\ref{boundary map}(a).
The assertion for $i\ne n,n-1$ is immediate, and in the remaining cases we have
\[0\to H_n(L^{(n)}\cup X,\,X)\to C_n^X(L)\xr{\partial}
Z_{n-1}\big(C_*^X(L)\big)\to H_{n-1}(L^{(n)}\cup X,\,X)\to 0.\]
This implies the assertion for $i=n-1$ and also that $j_*\simeq i$.
But since $i$ is an inclusion and $j_*\partial_n\simeq i\partial$ by Lemma \ref{boundary map}(b), we conclude that  
$\partial_n\simeq\partial$.
\end{proof}

\begin{lemma} \label{skeletal phantoms}
(a) $H_n(M,X)\simeq\colim_{i\in\N} H_n(L^{(i)}\cup X,\,X)$.

(b) There is a short exact sequence 
\[0\to\lim_{i\in\N} H^n(L^{(i)}\cup X,\,X)\to H^n(M,X)\to \derlim_{i\in\N}H^{n-1}(L^{(i)}\cup X,\,X)\to 0.\]
\end{lemma}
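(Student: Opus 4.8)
\subsection*{Proof proposal}

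The plan is to compare $(M,X)$ with the mapping telescope of the increasing sequence of closed subspaces $A_i:=L^{(i)}\cup X$, whose union is $M$, and to handle homology and cohomology in parallel. Concretely, reparametrize $I:=[0,1]$ so that the $i$th slab sits over $[s_i,s_{i+1}]$ with $s_i\nearrow 1$, put $T:=\bigcup_i A_i\times[s_i,s_{i+1}]$, and let $\bar T$ be the closure of $T$ in $M\times I$. One checks at once that $\bar T$ is closed in $M\times I$ (hence a Polish space), that $\bar T=T\cup(M\times\{1\})$, that $\hat X:=X\times I=\bar\pi^{-1}(X)$ for the projection $\bar\pi\colon\bar T\to M$, and that the ``push-up'' $((x,t),s)\mapsto(x,\,t+s(1-t))$ is, as a homotopy $\bar T\times I\to\bar T$, a deformation retraction of the pair $(\bar T,\hat X)$ onto $(M\times\{1\},\,X\times\{1\})$. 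By the homotopy axiom $\bar\pi$ therefore induces $H_*(\bar T,\hat X)\simeq H_*(M,X)$ and $H^*(M,X)\simeq H^*(\bar T,\hat X)$, so it suffices to compute the (co)homology of $(\bar T,\hat X)$.

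Next I would compute the (co)homology of the uncapped telescope $T$ relative to $X\times[0,1)$ by the classical even/odd Mayer--Vietoris argument. Write $T=U\cup V$ with $U$ and $V$ open in $T$, slight enlargements of the unions of the even-, respectively odd-indexed slabs, so that $U$, $V$ and $U\cap V$, as pairs with their traces on $X\times[0,1)$, are countable disjoint unions of spaces that deformation retract onto copies of the pairs $(A_i,X)$. Exactness and excision produce a Mayer--Vietoris sequence whose terms are computed by the case $X=\emptyset$ of controlled additivity, i.e.\ by $\bigsqcup$-additivity (which, in cohomology, is a statement about $\prod$). Since the connecting maps come out as the identity minus the shift by the bonding maps $A_i\hookrightarrow A_{i+1}$, this gives $H_n(T,\,X\times[0,1))\simeq\colim_i H_n(A_i,X)$ for homology and a short exact sequence $0\to\derlim_i H^{n-1}(A_i,X)\to H^n(T,\,X\times[0,1))\to\lim_i H^n(A_i,X)\to 0$ for cohomology.

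It then remains to prove that the inclusion $(T,\,X\times[0,1))\hookrightarrow(\bar T,\hat X)$ is an isomorphism on (co)homology, i.e.\ that adjoining the cap $M\times\{1\}$ changes nothing; this is where the hypothesis that the cells of $L$ are scattered towards $X$ is used, and it is the step I expect to be the main obstacle, since $M\times\{1\}$ is closed while $T$ is only open and dense in $\bar T$, so it cannot simply be excised. The part of the cap lying over $M\setminus X$ is collared in $\bar T$ (near $(x,1)$ with $x\notin X$, local finiteness of $L$ makes $\bar T$ a product $(\text{nbhd of }x)\times(1-\varepsilon,1]$), so it can be pushed into $T$ by a partition-of-unity argument and contributes nothing. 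This reduces everything to adjoining the fibers over $X$, namely to $H_*\big(\bar T,\,T\cup((M\setminus X)\times\{1\})\big)=0$ and its cohomological analogue, where the complement of the subcomplex is exactly $X\times\{1\}$ and the cells are scattered towards $X\times\{1\}$; here one must invoke the full controlled additivity axiom, applied to a half-cell/boundary pair as in Lemmas \ref{boundary map} and \ref{chains}, and it is precisely at this point that, by following Milnor's treatment of countable complexes \cite{Mi2} rather than his treatment of compacta, one sidesteps proving the vanishing of infinite-dimensional phantoms from scratch. Combining the three steps yields (a) and (b).
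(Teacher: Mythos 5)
Your steps 1 and 2 are sound, and step 2 is exactly the ingredient the paper also takes from Milnor (\cite{Mi2}, Lemmas 1 and 2). The genuine gap is step 3, and it is not a loose end but essentially the whole content of the lemma. By capping the telescope with $M\times\{1\}$ you buy the easy homotopy equivalence of step 1, but you then owe a proof, from the axioms alone, that deleting the cap does not change relative (co)homology; this is precisely the ``vanishing of infinite-dimensional phantoms'' that the paper says does not seem to follow from the axioms and that its argument is structured to avoid, so your claim to have sidestepped it by following \cite{Mi2} is not borne out by your own outline. None of the tools you invoke applies: the inclusion $(T,\,X\times[0,1))\hookrightarrow(\bar T,\hat X)$ is not a closed map ($T$ is dense and not closed in $\bar T$), so the map excision axiom says nothing about it --- Example \ref{non-closed} is exactly a warning that such inclusions typically fail to induce isomorphisms; the auxiliary pairs you would pass through, such as $\big(\bar T,\,T\cup((M\setminus X)\times\{1\})\big)$ or the triple with intermediate space $T\cup\hat X$, have non-closed second members and hence are not objects of $\C_{\nu\kappa}$ at all, so the axiomatic theory is not even defined on them; controlled additivity computes $H(M',X')$ when $M'\setminus X'$ is a topological disjoint union of pieces scattered towards $X'$, whereas the set you adjoin (a collar plus the single copy $(M\setminus X)\times\{1\}$) is nothing of the sort, so the ``half-cell/boundary'' reference to Lemmas \ref{boundary map} and \ref{chains} has no actual mechanism behind it; and your collaring of the cap over $M\setminus X$ uses local finiteness of $L$, which is not assumed in \S\ref{subcomplex-filtrations} (the paper explicitly notes that $L$ need not be locally finite, and in the application the $P_i$ are not assumed locally compact).

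The paper's proof avoids the cap altogether: it replaces $(M,X)$ by the homotopy equivalent pair $\big(M\times[0,\infty),\,X\times[0,\infty)\big)$, takes the telescope $L^{[0,\infty)}\subset L\times[0,\infty)$ of the skeleta going off to infinity, deformation retracts the ambient pair onto $\big(L^{[0,\infty)}\cup Z,\,Z\big)$ with $Z=\Fr L^{[0,\infty)}$ a closed subset of $X\times[0,\infty)$, and then compares with $\big(L^{[0,\infty)}\cup X\times[0,\infty),\,X\times[0,\infty)\big)$ by an inclusion of closed pairs which is a closed map and a homeomorphism off the distinguished subsets, so that the map excision axiom applies; after that the $\bigsqcup$-additivity telescope computation finishes as in your step 2. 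In other words, the ``end'' of the telescope is absorbed into $X\times[0,\infty)$ using only the homotopy and map excision axioms, rather than by a vanishing theorem for a cap carrying the full homotopy type of $M$. To repair your argument you would need to re-route step 3 through such a retraction onto the uncapped telescope; as written, step 3 defers exactly the statement to be proved.
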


\begin{proof} Clearly, $(M,X)$ is homotopy equivalent to $\big(M\x[0,\infty),\,X\x[0,\infty)\big)$.
Let $L^{[0,\infty)}\subset L\x[0,\infty)$ be the mapping telescope of the inclusions 
$L^{(0)}\subset L^{(1)}\subset\dots$.
It is not hard to construct a deformation retraction of $\big(M\x[0,\infty),\,X\x[0,\infty)\big)$ onto
$\big(L^{[0,\infty)}\cup Z,\,Z\big)$, where $Z=\Fr L^{[0,\infty)}$, which is a closed subset of $X\x[0,\infty)$.
Also, \[\big(L^{[0,\infty)}\cup Z,\,Z\big)\emb
\big(L^{[0,\infty)}\cup X\x[0,\infty),\,X\x[0,\infty)\big)\]
induces isomorphisms on homology and cohomology by map excision.
Finally, by standard arguments (see \cite{Mi2}*{Lemmas 1, 2}), any (co)homology theory satisfying 
$\bigsqcup$-additivity
satisfies \[H_n\big(L^{[0,\infty)}\cup X\x[0,\infty),\,X\x[0,\infty)\big)\simeq\colim_{i\in\N} H_n(L^{(i)}\cup X,\,X)\]
and a short exact sequence
\begin{multline*}
0\to\lim_{i\in\N} H^n(L^{(i)}\cup X,\,X)\to H_n\big(L^{[0,\infty)}\cup X\x[0,\infty),\,X\x[0,\infty)\big)\to \\
\derlim_{i\in\N}H^{n-1}(L^{(i)}\cup X,\,X)\to 0.
\end{multline*}
\end{proof}

\begin{proof}[Proof of Theorem \ref{computation}]
By Lemma \ref{chains}, the inclusion induced maps \[H_n(L^0\cup X,\,X)\to H_n(L^1\cup X,\,X)\to\dots\] are 
eventually isomorphisms, and their stable image $\colim_i H_n(L^i\cup X,\, X)$ is isomorphic to
$H_n\big(C_*^X(L)\big)$.
On the other hand, by Lemma \ref{skeletal phantoms} $\colim_i H_n(L^i\cup X,\, X)\simeq H_n(M,X)$.

The case of cohomology is similar, since the bonding maps in 
\[\dots\to H^{n-1}(L^1\cup X,\,X)\to H^{n-1}(L^0\cup X,\,X)\] are eventually isomorphisms, and so their $\derlim$ 
vanishes.
\end{proof}

\subsection{Proof} \label{uniqueness-proof}

\begin{proof}[Proof of Theorem \ref{uniqueness}]
As in the above strategy of proof, let us consider an extended mapping telescope $P_{[0,\infty]}$ for $X$,
but not necessarily with $P_0=pt$.
We have $H_n(X)\simeq H_n(X\sqcup P_0,\,P_0)$, which is in turn isomorphic to $H_{n+1}(P_{[0,\infty]},\,X\cup P_0)$ 
by considering the triple $(P_{[0,\infty]},\,X\cup P_0,\,P_0)$.
By Theorem \ref{computation} with $(M,M')=(P_{[0,\infty]},P_0)$,
\[H_{n+1}(P_{[0,\infty]},\,X\cup P_0)\simeq H_{n+1}\big(C^X_*(P_{[0,\infty)},P_0)\big).\]
Moreover, this isomorphism is natural with respect to any map $f\:X\to Y$, in the following sense.
If $Q_{[0,\infty]}$ is an extended mapping telescope for $Y$, by Lemma \ref{Milnor} $f$ extends to a 
map $\bar f\:P_{[0,\infty]}\to Q_{[0,\infty]}$ such that $\bar f^{-1}(Y)=X$.
Since $Q_0$ is a deformation retract of $Q_{[0,\infty]}$, we may assume that $\bar f(P_0)\subset Q_0$.
Thus $\bar f$ induces a map
\[H_{n+1}(P_{[0,\infty]},\,X\cup P_0)\to H_{n+1}(Q_{[0,\infty]},\,Y\cup Q_0).\]
Also, it is not hard to see that its restriction $F\:P_{[0,\infty)}\to Q_{[0,\infty)}$ induces a map
\[H_{n+1}\big(C^X_*(P_{[0,\infty)},P_0)\big)\to H_{n+1}\big(C^Y_*(Q_{[0,\infty)},Q_0)\big).\]
Then it follows from the proof of Theorem \ref{computation} that its isomorphisms commute with these two maps.
(Let us only note that we may assume $F$ to be cellular, so that 
$F(P_{[0,\infty]}^{(n)})\subset Q_{[0,\infty]}^{(n)}$ for each $n$.)
 
The said also applies to the other theory $H'$.
Thus we get a commutative diagram
\[\begin{CD}
H_n(X)@.\simeq H_{n+1}\big(C^X_*(P_{[0,\infty)},P_0)\big)@.\simeq H'_n(X)\\
@VV\bar f_*V@VV\bar f_*V@V\bar f_*VV\\
H_n(Y)@.\simeq H_{n+1}\big(C^Y_*(Q_{[0,\infty)},Q_0)\big)@.\simeq H'_n(Y)
\end{CD}\]
which provides the desired natural equivalence on single spaces.
The proof for pairs $(X,X')$ is similar, using Theorem \ref{computation} with 
$(M,M')=(P_{[0,\infty]},\,P'_{[0,\infty]}\cup P_0)$.
The proof for cohomology is also similar.
\end{proof}

\section{Addendum: Telescopic filtrations revisited} \label{chain complexes2}

The proof of the uniqueness theorem provides a method of computation of Steenrod--Sitnikov homology $H_i(X)$
and \v Cech cohomology $H^i(X)$ of the Polish space $X$ in terms of certain cellular (co)chain complexes.
It is natural ask whether these (co)chain complexes or their description can be simplified.

Let $P_{[0,\infty]}=P_{[0,\infty)}\cup X$ be as in \cite{M-I}*{Proposition \ref{fish:fine telescope}}, 
now assuming that
each $P_i$ is locally compact, and let each $P_i$ be cellulated by a cell complex $L_i$ and $P_{[0,\infty)}$ 
be cellulated by a cell complex $L$ as in \S\ref{uniqueness-section}. 
Let $\kappa$ and $\bar\nu$ be defined as in \S\ref{subcomplex-filtrations}.

We call an inverse sequence $\dots\xr{q_1}Q_1\xr{q_0}Q_0$ a {\it restriction} of the inverse sequence
$\dots\xr{p_1}P_1\xr{p_0}P_0$ if each $Q_i$ is a subcomplex of $L_i$ and each $q_i=p_i|_{Q_{i+1}}$ 
(in particular, each $p_i(Q_{i+1})\i Q_i$).

Let $\kappa'$ consist of the mapping telescopes $K_{[0,\infty)}$ of all restrictions $\dots\to K_1\to K_0$
of $\dots\to P_1\to P_0$ such that each $K_i$ is a finite subcomplex of $L_i$.

Let $\bar\nu'$ consist of the mapping telescopes $N_{[0,\infty)}$ of all restrictions $\dots\to N_1\to N_0$ of $\dots\to P_1\to P_0$ such that $\lim N_i=\emptyset$.

\begin{proposition} \label{telescope filtration}
(a) $\kappa'$ is a cofinal subset of $\kappa$.

(b) $\bar\nu'$ is a cofinal subset of $\bar\nu$.
\end{proposition}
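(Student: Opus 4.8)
The plan is to produce, for each given $C\in\kappa$ (respectively $Q\in\bar\nu$), an explicit restriction of $\dots\to P_1\to P_0$ whose mapping telescope contains $C$ (respectively $Q$) and whose terms are all finite (respectively whose inverse limit is empty). Write $M=P_{[0,\infty]}$ and, for $i\ge 0$, let $P_{[i,\infty]}\i M$ be the part of $M$ at ``time'' $\ge i-1$, i.e.\ the union of $X$ with the layers $P_j\x[j-1,j]$, $j\ge i$; this is closed in $M$, contains $X$, and carries the standard retraction $r_i\:P_{[i,\infty]}\to P_i$, which extends the $i$th coordinate map $X\to P_i$ and satisfies $r_j=(p_j\cdots p_{i-1})\circ r_i$ for $j<i$. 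I would use two facts from \cite{M-I}: since each $P_i$ is locally compact, each $L_i$ is locally finite and the image under $p_j\cdots p_{l-1}$ of a closed cell of $L_l$ lies in a single closed cell of $L_j$ (the bonding maps being simplicial, resp.\ cubical, onto subdivisions); and, $M$ being the fine extended telescope of \cite{M-I}*{Proposition \ref{fish:fine telescope}}, a sequence in $M$ at times tending to $\infty$ converges to a point $(x_i)\in X$ if and only if its $r_i$-images converge to $x_i$ for every $i$.

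For (a), given $C\in\kappa$, I would let $K_i$ be the closed star in $L_i$ of the compact set $r_i(\overline C\cap P_{[i,\infty]})$, i.e.\ the subcomplex generated by the closed cells meeting this set; it is finite, by local finiteness of $L_i$. Since the image of a closed star is contained in the closed star of the image, and $p_i\big(r_{i+1}(\overline C\cap P_{[i+1,\infty]})\big)=r_i(\overline C\cap P_{[i+1,\infty]})\i r_i(\overline C\cap P_{[i,\infty]})$, we get $p_i(K_{i+1})\i K_i$, so $\{K_i\}$ is a restriction; and its telescope contains $C$, since any cell of $C$ lying in $P_m\x[m-1,m]$ is carried by $r_m$ onto a cell of $K_m$. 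Thus $C\i K_{[0,\infty)}\in\kappa'$, proving (a); the inclusion $\kappa'\i\kappa$ --- that the fine extended telescope of an inverse sequence of finite complexes is compact --- follows (like the inclusion $\bar\nu'\i\bar\nu$ below) from the convergence criterion just recalled.

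For (b), given $Q\in\bar\nu$ --- so $Q\i P_{[0,\infty)}$ is closed in $M$ and hence disjoint from $X$ --- I would let $N_i$ be the closed star in $L_i$ of $r_i(Q\cap P_{[i,\infty]})$; now $N_i$ need not be finite, but $\{N_i\}$ is again a restriction whose telescope contains $Q$. The crux is to show $\lim N_i=\emptyset$: this gives $N_{[0,\infty)}\in\bar\nu'$, and also $\bar\nu'\i\bar\nu$, a sequence in $N_{[0,\infty)}$ converging to a point of $X$ yielding, via the $r_i$, a point of $\lim N_i$. Suppose then $(x_i)\in\lim N_i\i X$. For each $i$ choose a closed cell $\tau_i$ of $L_i$ with $x_i\in\tau_i$ and $\tau_i\cap r_i(Q\cap P_{[i,\infty]})\ne\emptyset$, and pick $y_i\in Q\cap P_{[i,\infty]}$ with $r_i(y_i)\in\tau_i$. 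For $j<i$ both $x_j=(p_j\cdots p_{i-1})(x_i)$ and $r_j(y_i)=(p_j\cdots p_{i-1})(r_i(y_i))$ lie in $(p_j\cdots p_{i-1})(\tau_i)$, hence in a common closed cell of $L_j$ through $x_j$; so $r_j(y_i)$ remains in the compact set $Z_j$, the union of the finitely many closed cells of $L_j$ through $x_j$. Now $y_i$ lies at a time $\ge i-1\to\infty$ while its shadows are trapped in the fixed compacta $Z_j$, so a diagonal argument extracts a subsequence of $(y_i)$ whose $r_j$-images converge, for each $j$, to points $z_j\in Z_j$ with $z_j=p_j(z_{j+1})$; then $(z_j)\in X$ and, by the convergence criterion, the subsequence converges in $M$ to $(z_j)$. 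As $Q$ is closed in $M$ this forces $(z_j)\in Q$, contradicting $Q\cap X=\emptyset$. Hence $\lim N_i=\emptyset$, and (b) follows.

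The main obstacle is bookkeeping rather than conceptual: one must pin down the cell structure of the telescope and the retractions $r_i$ well enough that ``the image of a cell of $L_l$ in $L_j$'' is legitimate (it lies inside a cell but need not be one, because of the subdivisions in the bonding maps), and one must make precise what neighbourhoods of a point of $X$ in the fine telescope $M$ look like, in order to justify --- in the form used repeatedly above --- that convergence toward $X$ is exactly convergence of all the $r_i$-shadows.
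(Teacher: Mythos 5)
Your proposal is correct and follows essentially the same route as the paper: in both, $K_i$ (resp.\ $N_i$) is the union of the closed cells of $L_i$ meeting the image of $\bar C\cap P_{[i,\infty]}$ (resp.\ of $Q\cap P_{[i,\infty)}$) in $P_i$, one checks these form restrictions whose telescopes contain the given subcomplex, and one derives a contradiction with closedness from a hypothetical point of $\lim N_i$. The only divergence is the last step of (b): the paper shows the lifted points of the given closed subcomplex converge to the given thread directly, via the scattered-cells property and a metric estimate on the fine telescope, whereas you trap the shadows $r_j(y_i)$ in compact cell stars and diagonalize, resting on the same inverse-limit description of $P_{[0,\infty]}$ that the paper itself invokes; both variants are sound.
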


\begin{proof}[Proof. (a)] The closure $K_{[0,\infty]}$ of every member $K_{[0,\infty)}$ of $\kappa'$ 
in $P_{[0,\infty]}$ is compact, since it is the inverse limit of the finite telescopes $K_{[0,i]}$.
Thus $\kappa'\subset\kappa$.

Let $C$ be a subcomplex of $L$ whose closure $\bar C$ in $P_{[0,\infty]}$ is compact.
Since $\bar C\cap P_{[i,\infty]}$ is compact, so is its image $Q_i$ in $P_i$.
Let $K_i$ be the union of all closed cells of $L_i$ that intersect $Q_i$.
Since $P_i$ is locally compact, $K_i$ is a finite subcomplex of $L_i$.
Since each $Q_{i+1}$ maps into $Q_i$, each $K_{i+1}$ maps into $K_i$.
Since the cellulation of $L$ is based on the product cellulations $L_i\x[i-1,\,i]$,
we have $C\cap P_{(i-1,i)}\subset K_i\x(i-1,\,i)$.
Hence the mapping telescope $K_{[0,\infty)}$ contains $C$.
\end{proof}

\begin{proof}[(b)]
First we need to show that $\bar\nu'\subset\bar\nu$.
Every member $N_{[0,\infty)}$ of $\bar\nu'$ is a subcomplex of $L$ and hence a closed subset of $P_{[0,\infty)}$.
We need to show that it is also a closed subset of $P_{[0,\infty]}$.
Indeed, suppose that a sequence of points $x_i\in N_{[0,\infty)}$ converges to a point $x\in X$.
Let us recall that $P_{[0,\infty]}$ is the inverse limit of retractions $P_{[0,i+1]}\to P_{[0,i]}$ 
between the finite telescopes, which extend the bonding maps $p_i\:P_{i+1}\to P_i$ and send every
mapping cylinder $\cyl(p_i|_{\{x\}})\subset\cyl(p_i)=P_{[i,\,i+1]}$ onto $p_i(x)$.
Hence for each $n$ there is a retraction $q^\infty_n\:P_{[0,\infty]}\to P_{[0,n]}$, which extends 
the bonding maps $p^i_n\:P_i\to P_n$ and the limit map $p^\infty_n\:X\to P_n$
and sends every mapping cylinder $\cyl(p_i|_{\{x\}})\subset\cyl(p_i)=P_{[i,\,i+1]}$ onto $p^{i+1}_n(x)$.
Then the points $q^\infty_n(x_i)$ belong to $N_{[0,n]}$ and converge to $q^\infty_n(x)$, which
therefore also belongs to $N_{[0,n]}$.
Since it also belongs to $P_n$, it must be in $N_n$.
On the other hand, $x$ is nothing but the thread of the points $q^\infty_n(x)=p^\infty_n(x)$, $n=1,2,\dots$.
Hence $x\in\lim N_i$, which is a contradiction.

It remains to prove that each member of $\bar\nu$ lies in a member of $\bar\nu'$.
Let $C$ be a subcomplex of $L$ that is a closed subset of $P_{[0,\infty]}$.
Let $Q_i$ be the image of $C\cap P_{[i,\infty)}$ in $P_i$, and let $N_i$ be the union of all 
closed cells of $L_i$ that intersect $Q_i$.
Since each $Q_{i+1}$ maps into $Q_i$, each $N_{i+1}$ maps into $N_i$.
Since the cellulation of $P_{[0,\infty)}$ is based on the product cellulations $L_i\x[i-1,\,i]$,
we have $C\cap P_{(i-1,i)}\subset N_i\x(i-1,\,i)$.
Hence the mapping telescope $N_{[0,\infty)}$ contains $C$.

Suppose that $x=(x_0,x_1,\dots)\in\lim N_i$.
Then each $x_i$ is contained in the same cell of $L_i$ with a $y_i\in N_i$ such that there exist 
a $z_i\in N_j$, $j\ge i$, satisfying $p^j_i(z_i)=y_i$ and 
a $w_i\in\cyl(p_{j-1}|_{\{z_i\}})\subset\cyl(p_{j-1})=P_{[j-1,j]}$ such that $w_i\in C$.
Since $x_i\to x$ in $P_{[0,\infty]}$ and the cells of $L$ are scattered towards $X$, we also 
have $y_i\to x$ as $i\to\infty$.
Since $d(z_i,x)\le d(y_i,x)$, we also have $z_i\to x$.
But then also $w_i\to x$, so $C$ is not closed in $P_{[0,\infty]}$, which is a contradiction. 
\end{proof}

Proposition \ref{telescope filtration} and Corollary \ref{simplicial duality} yield
 
\begin{corollary} \label{telescope duality}
Let $S$ be a subcomplex of $L$.

(a) $S$ lies in a member of $\bar\nu'$ if and only if every member of $\kappa'$ meets $S$ in a finite subcomplex.

(b) $S$ lies in a member of $\kappa'$ if and only if every member of $\bar\nu'$ meets $S$ in a finite subcomplex.
\end{corollary}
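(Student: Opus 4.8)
The plan is to obtain Corollary \ref{telescope duality} from Corollary \ref{simplicial duality} by a routine cofinality argument based on Proposition \ref{telescope filtration}. The key observation will be that passing from $\kappa$ to the cofinal subset $\kappa'$ (and from $\bar\nu$ to the cofinal subset $\bar\nu'$) alters neither side of either ``if and only if'' in Corollary \ref{simplicial duality}: neither the ``$S$ lies in a member of\,'' conditions nor the ``every member meets $S$ in a finite subcomplex'' conditions.

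First I would record the easy half: since $\bar\nu'\subset\bar\nu$ is cofinal by Proposition \ref{telescope filtration}(b), a subcomplex $S$ of $L$ lies in a member of $\bar\nu$ if and only if it lies in a member of $\bar\nu'$, and likewise $S$ lies in a member of $\kappa$ if and only if it lies in a member of $\kappa'$ by Proposition \ref{telescope filtration}(a). Next I would treat the finiteness conditions. That ``every member of $\kappa$ meets $S$ in a finite subcomplex'' implies ``every member of $\kappa'$ meets $S$ in a finite subcomplex'' is immediate from $\kappa'\subset\kappa$. For the converse, given any $K\in\kappa$, cofinality supplies $K'\in\kappa'$ with $K\subset K'$; then $K\cap S$ is a subcomplex of the finite subcomplex $K'\cap S$, hence is itself finite. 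The identical argument, with $\bar\nu$ and $\bar\nu'$ in place of $\kappa$ and $\kappa'$, handles the cohomological condition.

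Finally I would combine these equivalences with Corollary \ref{simplicial duality}. Part (a) of that corollary states that $S$ lies in a member of $\bar\nu$ if and only if every member of $\kappa$ meets $S$ in a finite subcomplex; rewriting both sides by the equivalences just established yields part (a) of the present corollary. Part (b) follows symmetrically from Corollary \ref{simplicial duality}(b). I do not anticipate any genuine obstacle here; the only point requiring a moment's care is the elementary fact that a subcomplex of a finite subcomplex of $L$ is finite, which is precisely what legitimizes replacing $\kappa$ by $\kappa'$ (and $\bar\nu$ by $\bar\nu'$) on the ``meets in a finite subcomplex'' side of each equivalence.
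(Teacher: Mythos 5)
Your proposal is correct and matches the paper's intended argument: the paper derives Corollary \ref{telescope duality} precisely by combining Proposition \ref{telescope filtration} with Corollary \ref{simplicial duality}, leaving the routine cofinality bookkeeping (which you spell out, including the point that a subcomplex of a finite subcomplex is finite) implicit. No gaps.
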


The following consequence of Corollary \ref{telescope duality} admits a simple direct proof (bypassing 
\cite{M-I}*{Proposition \ref{fish:duality}}, Lemma \ref{simplicial cofinality} and 
Proposition \ref{telescope filtration}), 
which even works without our standing assumption that the $P_i$ are locally compact.

Let $\phi$ consist of the mapping telescopes $F_{[0,\infty)}$ of all restrictions $\dots\to F_1\to F_0$
of $\dots\to P_1\to P_0$ such that each $F_i$ is a finite subcomplex of $L_i$, and there exists an $n$ such that
$F_i=\emptyset$ for all $i\ge n$.

\begin{corollary} \label{telescope duality2}
Let $T_{[0,\infty)}$ be the mapping telescope of a restriction $\dots\to T_1\to T_0$ of $\dots\to P_1\to P_0$.

(a) $T_{[0,\infty)}\in\bar\nu'$ if and only if every member of $\kappa'$ meets $T_{[0,\infty)}$ 
in a member of $\phi$.

(b) $T_{[0,\infty)}\in\kappa'$ if and only if every member of $\bar\nu'$ meets $T_{[0,\infty)}$ 
in a member of $\phi$.
\end{corollary}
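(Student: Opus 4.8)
The plan is to give the promised direct proof, which rests on elementary bookkeeping about restrictions together with one classical fact: an inverse sequence of finite nonempty sets has nonempty inverse limit (equivalently, compactness of a product of finite discrete spaces; König's lemma). First I would record the bookkeeping. For any restrictions $\dots\to T_1\to T_0$ and $\dots\to N_1\to N_0$ of $\dots\to P_1\to P_0$, the telescope $T_{[0,\infty)}$ is determined by its levels $T_i$, and $T_{[0,\infty)}\cap N_{[0,\infty)}$ is the telescope of the restriction with levels $T_i\cap N_i$; this uses only that the cellulation of $P_{[0,\infty)}$ is built from the product cellulations $L_i\x[i-1,i]$. Since $p_i(T_{i+1})\subset T_i$, emptiness of any level $T_m$ propagates upward: $T_i=\emptyset$ for all $i\ge m$. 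Hence a restriction with finite levels lies in $\phi$ if and only if some level is empty, which by the quoted fact holds if and only if $\lim T_i=\emptyset$. Finally, every thread of $\{T_i\cap N_i\}$ is simultaneously a thread of $\{T_i\}$ and of $\{N_i\}$, so $\lim(T_i\cap N_i)$ is contained in both $\lim T_i$ and $\lim N_i$.

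The ``only if'' directions now follow at once. If $T_{[0,\infty)}\in\bar\nu'$, i.e.\ $\lim T_i=\emptyset$, then for every $K_{[0,\infty)}\in\kappa'$ the restriction $T\cap K$ has finite levels and $\lim(T_i\cap K_i)=\emptyset$, so $T\cap K\in\phi$; this gives (a). If $T_{[0,\infty)}\in\kappa'$, i.e.\ every $T_i$ is finite, then for every $N_{[0,\infty)}\in\bar\nu'$ the restriction $T\cap N$ has finite levels and $\lim(T_i\cap N_i)\subset\lim N_i=\emptyset$, so $T\cap N\in\phi$; this gives (b).

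For the ``if'' direction of (b) I would argue contrapositively. If $T_{[0,\infty)}\notin\kappa'$, choose a level $T_m$ that is an infinite subcomplex and let $N$ be the restriction with $N_i=T_i$ for $i\le m$ and $N_i=\emptyset$ for $i>m$; this is a restriction since $p_i(T_{i+1})\subset T_i$ and $p_m(N_{m+1})=\emptyset\subset T_m$, and $\lim N_i=\emptyset$ because $N_{m+1}=\emptyset$, so $N_{[0,\infty)}\in\bar\nu'$. But $T\cap N=N$ has $m$-th level the infinite complex $T_m$, so $T\cap N\notin\phi$, contradicting the hypothesis.

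The ``if'' direction of (a) is the one place where a geometric input enters, and I expect it to be the main (if still modest) obstacle. Again arguing contrapositively, suppose $\lim T_i\ne\emptyset$ and fix a thread $(t_i)$ in it. Let $\sigma_i$ be the cell of $L_i$ whose relative interior contains $t_i$, and let $K_i=\bar\sigma_i$ be the finite subcomplex of $L_i$ consisting of $\sigma_i$ and its faces. I must check that $\{K_i\}$ is a restriction, i.e.\ $p_i(\bar\sigma_{i+1})\subset\bar\sigma_i$; then $K_{[0,\infty)}\in\kappa'$. Since $p_i$ is simplicial as a map $L_{i+1}\to L_i'$ for some subdivision $L_i'$ of $L_i$ (resp.\ cubical as a map $L_{i+1}\to L_i^\#$), it sends the open cell $\sigma_{i+1}$ onto a single open cell $\tau$ of $L_i'$ (resp.\ of $L_i^\#$), and, being affine (resp.\ a coordinate projection) on $\sigma_{i+1}$, it carries relative interiors onto relative interiors, so $t_i=p_i(t_{i+1})\in\tau$. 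By a standard property of subdivisions, $\tau$ lies in a single open cell of $L_i$, which must be $\sigma_i$ since $t_i\in\tau$ and $t_i$ also lies in $\sigma_i$; hence $\tau\subset\bar\sigma_i$, and by continuity of $p_i$ we get $p_i(\bar\sigma_{i+1})\subset\overline{p_i(\sigma_{i+1})}=\bar\tau\subset\bar\sigma_i$. Finally $t_i\in T_i\cap K_i$ for every $i$, so the restriction $T\cap K$ has no empty level and hence is not in $\phi$, contradicting the hypothesis. This establishes (a), and with it the corollary; note that none of this used local compactness of the $P_i$.
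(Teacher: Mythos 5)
Your proof is correct and follows essentially the same route as the paper's: the levelwise intersection argument (finite levels plus empty inverse limit forces a member of $\phi$) gives both ``only if'' directions, the truncated restriction $\dots\to\emptyset\to T_n\to\dots\to T_0$ handles the ``if'' direction of (b), and the minimal closed cells along a thread handle the ``if'' direction of (a). The only difference is that you spell out the verification that these minimal closed cells form a restriction (via simpliciality/cubicality of the bonding maps with respect to subdivisions), which the paper simply asserts.
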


\begin{proof} If $K_{[0,\infty)}$ is a member of $\kappa'$ and $N_{[0,\infty)}$ is a member of $\bar\nu'$,
their intersection is the mapping telescope $F_{[0,\infty)}$ of a restriction $\dots\to F_1\to F_0$ of
$\dots\to P_1\to P_0$, where each $F_i=K_i\cap N_i$.
Then each $F_i$ is a finite complex and $\lim F_i=\emptyset$, which would not be the case if each 
$F_i\ne\emptyset$.
Thus $F_{[0,\infty)}$ is a member of $\phi$.

Next, if $T_{[0,\infty)}$ does not belong to $\bar\nu'$, then $\lim T_i$ contains a point $x=(x_0,x_1,\dots)$.
Let $K_i$ be the minimal closed cell of $L_i$ containing $x_i$.
Then each $K_{i+1}$ maps into $K_i$, and so $K_{[0,\infty)}$ is a member of $\kappa'\but\phi$
that lies in $T_{[0,\infty)}$.

Finally, if $T_{[0,\infty)}$ does not belong to $\kappa'$, then some $T_n$ is an infinite complex.
Then the inverse sequence $\dots\to\emptyset\to T_n\to\dots\to T_0$
is a member of $\bar\nu'\but\phi$ that lies in $T_{[0,\infty)}$.
\end{proof}

Let $A$ be a subcomplex of $L$ and $B$ a subcomplex of $A$.
Let $\nu'=\{\Cl(L\but F)\mid F\in\bar\nu'\}$ and
\[\resizebox{\linewidth}{!}{
\begin{minipage}{\linewidth}
\begin{alignat*}{4}
C_*^{\kappa'}(A,B)&\ =\ \colim_{K\in\kappa'}&C_*^\infty(A\cap K,\,B\cap K)&\ \simeq\ 
\colim_{K\in\kappa'}\ \lim_{N\in\nu'}\,C_*\big(A\cap K,\,(B\cap K)\cup (A\cap K\cap N)\big)\\
C^*_{\kappa'}(A,B)&\ =\ \lim_{K\in\kappa'}&C^*_c(A\cap K,\,B\cap K)&\ \simeq\ 
\lim_{K\in\kappa'}\ \colim_{N\in\nu'}\,C^*\big(A\cap K,\,(B\cap K)\cup (A\cap K\cap N)\big)\\
C_*^{\nu'}(A,B)&\ =\ \lim_{N\in\nu'}&C_*\big(A,\,B\cup (A\cap N)\big)&\ \simeq\ 
\lim_{N\in\nu'}\ \colim_{K\in\kappa'}\,C_*\big(A\cap K,\,(B\cap K)\cup (A\cap K\cap N)\big)\\
C^*_{\nu'}(A,B)&\ =\ \colim_{N\in\nu'}&\ C^*\big(A,\,B\cup (A\cap N)\big)&\ \simeq\ 
\colim_{N\in\nu'}\ \lim_{K\in\kappa'}\,C^*\big(A\cap K,\,(B\cap K)\cup (A\cap K\cap N)\big)
\end{alignat*}
\end{minipage}
}\]

Using the notation $C^*_X(A,B)$, $C_*^X(A,B)$ introduced in \S\ref{chain complexes},
Proposition \ref{telescope filtration} yields

\begin{corollary} \label{telescope complexes}
(a) $C_*^{\kappa'}(A,B)$ and $C_*^{\nu'}(A,B)$ are isomorphic to $C_*^X(A,B)$.

(b) $C^*_{\kappa'}(A,B)$ and $C^*_{\nu'}(A,B)$ are isomorphic to $C^*_X(A,B)$.
\end{corollary}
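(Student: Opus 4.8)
The plan is to deduce Corollary~\ref{telescope complexes} purely from cofinality, in the same spirit in which Corollary~\ref{telescope duality} was deduced from Proposition~\ref{telescope filtration} and Corollary~\ref{simplicial duality}. The point is that a directed (co)limit is unchanged when its index set is replaced by a cofinal subset, so passing from $\kappa$, $\bar\nu$, $\nu$ to $\kappa'$, $\bar\nu'$, $\nu'$ alters neither the (co)chain groups nor --- the differentials being pinned down by universal properties --- the (co)chain complexes of \S\ref{chain complexes}.

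In detail, I would first record that $\nu'\subset\nu$ and that $\nu'$ is cofinal in $\nu$: for $F\in\bar\nu'\subset\bar\nu$ the subcomplex $\Cl(L\but F)$ satisfies $\Cl(L\but F)\cup X\supset M\but F$, an open set containing $X$, so $\Cl(L\but F)\in\nu$; and cofinality of $\nu'$ in $\nu$ is the complement, under $N\leftrightarrow\Cl(L\but N)$, of cofinality of $\bar\nu'$ in $\bar\nu$ (Proposition~\ref{telescope filtration}(b)). Next, for any subcomplex $P$ of $L$ and any $n$, cofinality of $\kappa'$ in $\kappa$ (Proposition~\ref{telescope filtration}(a)) gives $\colim_{K\in\kappa'}C_*^\infty(P\cap K)\simeq\colim_{K\in\kappa}C_*^\infty(P\cap K)=C_*^\kappa(P)=C_*^X(P)$, and similarly for the other three flavours $C^*_{\kappa'}(P)$, $C_*^{\nu'}(P)$, $C^*_{\nu'}(P)$; that all four coincide is Corollary~\ref{telescope duality}, the $\kappa'$--$\bar\nu'$ analogue of Corollary~\ref{simplicial duality}. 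The rearrangements $\colim\lim=\lim\colim$ in the displayed formulas defining $C_*^{\kappa'}(A,B)$, $C^*_{\kappa'}(A,B)$, $C_*^{\nu'}(A,B)$, $C^*_{\nu'}(A,B)$ are then isomorphisms by Corollary~\ref{telescope duality}, exactly as the analogous ones were in \S\ref{chain complexes}. Finally, each of these groups is assembled over (co)directed indices from the finite relative cellular groups $C_*\big(A\cap K,\,(B\cap K)\cup(A\cap K\cap N)\big)$, and its boundary (resp.\ coboundary) is the unique natural extension of the cellular (co)boundary on those finite pieces; hence the cofinality identifications, induced by the canonical maps among the finite pieces, commute with the differentials, so they are isomorphisms of (co)chain complexes, not merely of graded groups.

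For the relative statement one also uses that $\{K\cap B\mid K\in\kappa'\}$ is cofinal among the subcomplexes of $B$ with compact closure in $M$ --- any such subcomplex is itself a member of $\kappa$, so this is again Proposition~\ref{telescope filtration}(a) --- which makes $C_*^{\kappa'}(B)$ computable from the pieces $B\cap K$; exactness of directed colimit then yields $C_*^{\kappa'}(A,B)=\colim_{K\in\kappa'}C_*^\infty(A\cap K)/C_*^\infty(B\cap K)\simeq C_*^X(A)/C_*^X(B)=C_*^X(A,B)$, and the $\nu'$ and cohomological cases are handled identically, using the support-subgroup descriptions (or left exactness of $\lim$) for the $\nu'$-homology quotient. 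I do not expect a real obstacle here; the only points requiring care are the bookkeeping that identifies $\nu'$ as a cofinal subset of $\nu$ and the check that the cofinality maps respect the differentials, both of which are routine given the universal-property description of those differentials in \S\ref{chain complexes}.
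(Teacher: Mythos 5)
Your proposal is correct and follows the paper's own route: the paper deduces Corollary \ref{telescope complexes} directly from Proposition \ref{telescope filtration}, i.e.\ from cofinality of $\kappa'$ in $\kappa$ and $\bar\nu'$ in $\bar\nu$ (hence of $\nu'$ in $\nu$), invariance of the (co)limits in \S\ref{chain complexes} under cofinal reindexing, and compatibility with the differentials via their universal-property description. Your additional bookkeeping (exactness of directed colimits for the relative case, the $N\leftrightarrow\Cl(L\but N)$ translation) is just an explicit spelling-out of what the paper leaves implicit.
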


Let us recall that $H_i(X)\simeq H_{i+1}(P_{[0,\infty]},\,P_0)$ by direct arguments 
(see \S\ref{uniqueness-proof}) and 
$H_{i+1}(P_{[0,\infty]},\,P_0)\simeq H_{i+1}\big(C_*^X(P_{[0,\infty)},\,P_0)\big)$ by Theorem \ref{computation};
similarly for cohomology.
Hence we get

\begin{theorem} \label{telescope computation}
(a) $H_i(X)\simeq H_{i+1}\big(C_*^{\kappa'}(P_{[0,\infty)},\,P_0)\big)\simeq
H_{i+1}\big(C_*^{\nu'}(P_{[0,\infty)},\,P_0)\big)$.

(b) $H^i(X)\simeq H^{i+1}\big(C^*_{\kappa'}(P_{[0,\infty)},\,P_0)\big)\simeq 
H^{i+1}\big(C^*_{\nu'}(P_{[0,\infty)},\,P_0)\big)$.
\end{theorem}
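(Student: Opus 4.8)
The plan is to deduce the theorem by composing three isomorphisms, the first two of which are already on record. First I would recall the standing set-up of this section — $P_{[0,\infty]}=P_{[0,\infty)}\cup X$ an extended mapping telescope for $X$ with the $P_i$ locally compact, $P_{[0,\infty)}$ cellulated by $L$ with cells scattered towards $X$, and $\kappa,\bar\nu,\kappa',\bar\nu',\nu'$ as above — and invoke the direct arguments of \S\ref{uniqueness-proof} to get the degree shift $H_i(X)\simeq H_{i+1}(P_{[0,\infty]},\,X\cup P_0)$ (identify $H_i(X)$ with $H_i(X\sqcup P_0,P_0)$, then run the homology sequence of the triple $(P_{[0,\infty]},\,X\cup P_0,\,P_0)$, using that $P_0\emb P_{[0,\infty]}$ is a homology equivalence); cohomology is dual. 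Next I would apply Theorem \ref{computation} with $(M,M')=(P_{[0,\infty]},P_0)$, whose distinguished closed subspace is then the limit space $X$ and for which $L'=P_{[0,\infty)}\cap P_0=P_0$, obtaining $H_{i+1}(P_{[0,\infty]},\,X\cup P_0)\simeq H_{i+1}\big(C_*^X(P_{[0,\infty)},P_0)\big)$ and dually $H^{i+1}(P_{[0,\infty]},\,X\cup P_0)\simeq H^{i+1}\big(C^*_X(P_{[0,\infty)},P_0)\big)$.

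The third and last isomorphism I would pull in is Corollary \ref{telescope complexes} specialized to $(A,B)=(P_{[0,\infty)},P_0)$: chain-complex isomorphisms $C_*^{\kappa'}(P_{[0,\infty)},P_0)\simeq C_*^{\nu'}(P_{[0,\infty)},P_0)\simeq C_*^X(P_{[0,\infty)},P_0)$ and cochain-complex isomorphisms $C^*_{\kappa'}(P_{[0,\infty)},P_0)\simeq C^*_{\nu'}(P_{[0,\infty)},P_0)\simeq C^*_X(P_{[0,\infty)},P_0)$. Taking homology, resp.\ cohomology, of these complexes and splicing the result together with the two isomorphisms above produces exactly the chains of isomorphisms asserted in (a) and (b).

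I do not expect a real obstacle: all the weight has already been carried by Proposition \ref{telescope filtration} (cofinality of $\kappa'$ in $\kappa$ and of $\bar\nu'$ in $\bar\nu$ — the only nontrivial, genuinely geometric input) and by Theorem \ref{computation} (the axiomatic identification of $(M,X)$-homology with the combinatorial complex). The single point warranting a sentence of care is that Corollary \ref{telescope complexes} must be used as an isomorphism of (co)chain complexes, not merely of the underlying graded (co)chain groups; but that strengthening is already part of the corollary, coming from the universal-property argument of \S\ref{chain complexes} (restriction of a (co)limit to a cofinal subsystem commutes with the extended boundary and coboundary operators). So the present statement is just the formal combination of those two results with the degree shift $H_i(X)\simeq H_{i+1}(P_{[0,\infty]},\,X\cup P_0)$.
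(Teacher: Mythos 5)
Your proposal is correct and coincides with the paper's own (very brief) proof: the theorem is obtained by splicing the degree-shift isomorphism $H_i(X)\simeq H_{i+1}(P_{[0,\infty]},\,X\cup P_0)$ from \S\ref{uniqueness-proof} with Theorem \ref{computation} applied to $(M,M')=(P_{[0,\infty]},P_0)$ and with Corollary \ref{telescope complexes} (itself resting on the cofinality statements of Proposition \ref{telescope filtration}), used as an isomorphism of (co)chain complexes. Your remark about needing the chain-level (not merely group-level) identification is exactly the point the paper delegates to the universal-property discussion of \S\ref{chain complexes}, so nothing is missing.
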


\begin{remark} We can further modify the filtration $\kappa'$ so that the resulting filtration $\kappa''$ 
is indexed precisely by all compact $K\subset X$, and the resulting chain complex 
$C_*^{\kappa''}(P_{[0,\infty)},\,P_0)$ has the same homology as $C_*^{\kappa'}(P_{[0,\infty)},\,P_0)$ 
(and similarly for cochains).

Indeed, for each $i$ and each compact $K\subset X$, let $Q_{iK}$ be the union of all closed cells of $P_i$
that intersect the image of $K$ in $P_i$.
Since $P_i$ is locally compact, $Q_{iK}$ is a finite subcomplex of $P_i$.
By our choice of the triangulations of the $P_i$, each $Q_{i+1,K}$ maps into $Q_{iK}$.
The mapping telescope $Q_{[0,\infty),K}$ of the inverse sequence $\dots\to Q_{1K}\to Q_{0K}$ is a member of
$\kappa'$.
Let $\kappa''$ consist of the mapping telescopes $Q_{[0,\infty),K}$ corresponding to all compact $K\subset X$.

Given a member $Q_{[0,\infty)}$ of $\kappa'$, the inverse limit $K$ of $\dots\to Q_{1K}\to Q_{0K}$ is compact.
Since the image of $K$ in $P_i$ lies in $Q_i$, we have $Q_{iK}\subset Q_i$.
Moreover, it is easy to see that $Q_{[0,\infty)}$ deformation retracts onto $Q_{[0,\infty),K}$ by 
a vertical collapse.
\end{remark}

\begin{example} Suppose that $X$ is embedded in a cell complex $Q$ and we are given a sequence of iterated 
subdivisions $Q^{(i)}$ of $Q$ such that each cell of $Q^{(i)}$ has diameter $\le 2^{-i}$.
(For example, $Q=\R^m$ and $Q^{(i)}$ is the standard cubulation of $\R^m$ with vertex set $2^{-i}\Z^m$.)
Then we may take each $P_i$ to be the union of all closed cells of $Q^{(i)}$ that meet $X$, and the bonding maps
$P_{i+1}\to P_i$ to be the inclusions.
In this case we can further modify the filtration $\bar\nu'$ so that the resulting filtration $\bar\nu''$ is 
indexed by a cofinal family of neighborhoods of $X$ in $Q$, and the resulting chain complex 
$C_*^{\nu''}(P_{[0,\infty)},\,P_0)$ has the same homology as $C_*^{\nu'}(P_{[0,\infty)},\,P_0)$ 
(and similarly for cochains).

Indeed, given a neighborhood $U$ of $X$ that is a union of some of the closed cells of $Q,Q^{(1)},Q^{(2)},\dots$ 
that meet $X$, let $N_{iU}$ be the union of all closed cells of $Q^{(i)}$ that meet $X$ but do not lie in $U$.
Then each $N_{iU}\subset P_i$ and each $N_{i+1,U}\subset N_{iU}$.
Also we have $\bigcap_{i\in\N}N_{iU}=\emptyset$.
Indeed, $\bigcap_{i\in\N}N_{iU}\subset\bigcap_{i\in\N}P_i=X$, but for each $x\in X$ there exists an $n$
such that the closed $2^{-i}$-ball centered at $x$ lies in $U$, whence $x\notin N_{nU}$.
Thus the mapping telescope $N_{[0,\infty),U}$ of the inclusions $\dots\subset N_{1U}\subset N_{0U}$ is 
a member of $\bar\nu'$.
Let $\bar\nu''$ consist of the mapping telescopes $N_{[0,\infty),U}$ corresponding to all $U$ as above.

Given a member $N_{[0,\infty)}$ of $\bar\nu'$, let $U_i$ be the union of all closed cells of $Q^{(i)}$ 
that meet $X$ but do not lie in $N_i$, and let $U=\bigcup_{i\in\N} U_i$.
Then each $N_{iU}\subset N_i$, but if some cell $\sigma$ of $N_i$ contains no cells of $N_j$ for some $j>i$,
then $\sigma$ does not lie in $N_{iU}$. 
However, if $U'_i$ is the union of all closed cells of $Q^{(i)}$ that meet $X$ but do not lie in $N_{iU}$,
it is easy to see that the mapping telescope $U'_{[0,\infty)}=\Cl(P_{[0,\infty)}\but N_{[0,\infty),U})$ 
deformation retracts onto $U_{[0,\infty)}=\Cl(P_{[0,\infty)}\but N_{[0,\infty)})$ by a vertical collapse.
\end{example}

-----------------------------------

\begin{remark} We may use that $X$ is homotopy equivalent to $E_\Delta(X)$ by 
\cite{M-IV}*{???}. 
For the purposes of computation of homology the fibers of $E_\Delta(X)\to K_\Delta(X)$, which are compact subsets 
$K_\alpha$ of $X$, may be replaced by the appropriate compact pairs $(P_{\alpha,[0,\infty]},K_\alpha)$ in 
the extended telescope $(P_{[0,\infty]},X)$.
These can in turn be replaced by the one-point compactifications $(P_{\alpha,[0,\infty)}^+,\{\infty\})$
by the map excision axiom (note that we have a map with compact fibers $K_\alpha$).
It does not seem to be easy, however, to compute the homology of the resulting space (even though we know 
the answer).

If we want to deduce generalized additivity from something else, we can apply this construction to $(M,X)$.
Then the (co)homology of $(M,X)$ is isomorphic to the (co)homology of the ``telescope'' of clusters,
parametrized by $K_\Delta(X)$.
\end{remark}

Question: Do Milnor's additivity axioms for inseparable completely metrizable spaces imply the controlled additivity
for Polish spaces? Use the discretely indexed inseparable telescope, comupute its (co)homology by
inducting on skeleta. Maybe this can be used to deduce the continuity/compart support axiom from
Milnor's additivity axioms for inseparable completely metrizable spaces?

\subsection*{Acknowledgements}

I would like to thank P. Akhmetiev and T. Banakh for useful remarks.

\end{document}